\documentclass[11pt]{amsart}
\usepackage{palatino, mathpazo}
\usepackage[mathscr]{eucal}
\usepackage{amsmath,amsfonts,amssymb, mathrsfs, mathtools, bm}
\usepackage[all]{xy}
\usepackage{hyperref}
\usepackage{paralist}

\usepackage{ytableau}

\newtheorem{theorem}{Theorem}[section]
\newtheorem{lemma}[theorem]{Lemma}
\newtheorem{proposition}[theorem]{Proposition}
\newtheorem{notation}[theorem]{Notation}
\newtheorem{corollary}[theorem]{Corollary}
\newtheorem{remark}[theorem]{Remark}
\newtheorem{example}[theorem]{Example}
\newtheorem{definition}[theorem]{Definition}

\numberwithin{equation}{section}

\numberwithin{equation}{section}

\newcommand{\Img}{\mathrm{Im}}
\newcommand{\Sing}{\mathrm{Sing}}

\newcommand{\Bl}{\mathrm{Bl}}

\newcommand{\rk}{\mathrm{rank}}

\newcommand{\uHom}{\underline{\mathrm{Hom}}}
\newcommand{\IC}{\mathbf{IC}}

\newcommand{\sV}{\mathscr{V}}

\newcommand{\sO}{\mathscr{O}}
\newcommand{\sL}{\mathscr{L}}
\newcommand{\sE}{\mathscr{E}}
\newcommand{\sF}{\mathscr{F}}
\newcommand{\sC}{\mathscr{C}}
\newcommand{\sQ}{\mathscr{Q}}

\newcommand{\cO}{\mathcal{O}}
\newcommand{\cI}{\mathcal{I}}

\newcommand{\bC}{\mathbb{C}}
\newcommand{\bQ}{\mathbb{Q}}

\newcommand{\bP}{\mathbb{P}}
\newcommand{\bD}{\mathbb{D}}
\newcommand{\bN}{\mathbb{N}}

\title{A note on nodal determinantal hypersurfaces}

\author[S.-S.\ Wang]{Sz-Sheng Wang\textsuperscript{\dag}}


\address{\textsuperscript{\dag}Shing-Tung Yau Center and School of Mathematics, Southeast University, Nanjing 211189, China}

\email{103200059@seu.edu.cn}




\begin{document}

\maketitle

\begin{abstract}

We prove that a general determinantal hypersurface of dimension 3 is nodal. Moreover, in terms of Chern classes associated with bundle morphisms, we derive a formula for the intersection homology Euler characteristic of a general determinantal hypersurface.






\end{abstract}

\section{Introduction} \label{intro}

A large and important class of varieties is the class of determinantal varieties. These are a particular case of \emph{degeneracy loci} $D_i (\sigma)$ associated with a morphism $\sigma :\sE \to \sF$ of vector bundles on $M$, which is locally defined by the ideal generated by all $(i + 1)$-minors of a matrix for $\sigma$.

On the other hand, it is well known that a hypersurface 
$$
x_0 g_{\delta}(x_0, \cdots, x_4) + x_1 f_{\delta}(x_0, \cdots, x_4) = 0
$$
in $\bP^4$ is a \emph{nodal} determinantal hypersurface, that is, all its singular points are ordinary double points (ODPs). Here, $g_{\delta}$ and $f_{\delta}$ are general polynomials of degree $\delta$. In this note, we generalize this to a general determinantal hypersurface of dimension $3$. 


To state our result, we need an appropriate notion of the generality of morphisms. Suppose that $M$ is a smooth proper variety and $\sE$, $\sF$ have the same rank $n + 1$. A morphism $\sigma : \sE \to \sF$ is said to be \emph{$n$-general} if for all $0 \leqslant i \leqslant n$, the subset $D_i(\sigma) \setminus D_{i - 1}(\sigma)$ is smooth of codimension $(n + 1 - i)^2$ in the smooth proper variety $M$ (see Definition \ref{general}). 



\begin{theorem} [Theorem \ref{ODPcoro}] \label{introODP}
Suppose that the smooth proper variety $M$ has dimension 4. If a morphism $\sigma $ is $n$-general, then the determinantal hypersurface $D_n(\sigma)$ is nodal, provided that it is irreducible.
\end{theorem}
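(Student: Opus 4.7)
The plan is to combine a codimension count coming from $n$-generality with a local normal form for the matrix representing $\sigma$.

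Since $\dim M = 4$ and $(n+1-(n-2))^{2}=9>4$, the $n$-generality hypothesis forces $D_{n-2}(\sigma)=\emptyset$. Hence $D_{n-1}(\sigma)=D_{n-1}(\sigma)\setminus D_{n-2}(\sigma)$ is smooth of codimension $4$ in $M$; being pure of dimension $0$, it is a finite reduced set of points. Likewise, $D_n(\sigma)\setminus D_{n-1}(\sigma)$ is smooth of codimension $1$ in $M$, so it lies in the smooth locus of $D_n(\sigma)$. Combined with the irreducibility assumption, this reduces the theorem to showing that each point $p\in D_{n-1}(\sigma)$ is an ordinary double point of $D_n(\sigma)$.

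Fix such a point $p$. Because $p\notin D_{n-2}(\sigma)$, the fiber morphism $\sigma(p)$ has rank exactly $n-1$, so some $(n-1)\times(n-1)$ minor of any matrix representing $\sigma$ is a unit at $p$. After choosing local trivializations of $\sE$ and $\sF$ so that this minor occupies the upper-left corner, I would then perform Gauss-type row and column operations with coefficients in $\sO_{M,p}$ — each corresponding to a further change of local frame — to bring the representing matrix into the block form
\[
A(x)=\begin{pmatrix} I_{n-1} & 0 \\ 0 & B(x) \end{pmatrix},\qquad B(p)=0,
\]
where $B=(b_{ij})_{1\le i,j\le 2}$. In this presentation, the $n\times n$ minors of $A$ are, up to sign, precisely the four entries $b_{ij}$, so the local equation of $D_n(\sigma)$ at $p$ is $\det B = b_{11}b_{22}-b_{12}b_{21}$, while the ideal of $D_{n-1}(\sigma)$ at $p$ equals $(b_{11},b_{12},b_{21},b_{22})$.

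The smoothness of $D_{n-1}(\sigma)$ of codimension $4$ at $p$ then forces the differentials $db_{ij}(p)$ to be linearly independent, so the four functions $b_{ij}$ constitute a regular system of local parameters on $M$ at $p$. In these coordinates, $D_n(\sigma)$ is cut out by the standard ODP equation $u_1u_2-u_3u_4=0$, which completes the argument. The step that I expect to require the most care is the reduction to the $2\times 2$ block form: one has to verify that the Gauss-type elimination really can be effected by matrices invertible over $\sO_{M,p}$ (and not only over the residue field), and to check that $B(p)=0$ is forced by the rank condition on $\sigma(p)$; everything afterwards is a direct unpacking of the definition of $n$-generality.
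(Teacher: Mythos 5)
Your overall strategy --- reduce at a point $p\in D_{n-1}(\sigma)$ to the $2\times 2$ Schur complement $B$ and read off the node from the local equation $\det B$ --- is sound, and the steps you flagged as delicate are actually the routine ones: since some $(n-1)\times(n-1)$ minor is a unit in $\sO_{M,p}$, the block-diagonalization is the usual Schur-complement factorization with matrices invertible over $\sO_{M,p}$, the vanishing $B(p)=0$ follows from $\rk \,\sigma(p)=n-1$, and in that presentation the ideal of $n\times n$ minors is indeed $(b_{11},b_{12},b_{21},b_{22})$ while $\det\sigma$ becomes $\det B$. The genuine gap is the one-line step ``the smoothness of $D_{n-1}(\sigma)$ of codimension $4$ at $p$ forces the differentials $db_{ij}(p)$ to be linearly independent.'' If $n$-generality is read as Definition \ref{general} is literally phrased --- the \emph{subset} $D_{n-1}\setminus D_{n-2}$ is smooth of codimension $4$ --- this implication fails, because a finite reduced set of points is automatically a smooth subset. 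Concretely, on a chart take $\sigma$ with matrix $\mathrm{diag}(I_{n-1},B)$, where
$$
B=\begin{pmatrix} x_1 & x_2\\ x_3 & x_1x_4+x_4^3\end{pmatrix}.
$$
Set-theoretically $D_{n-1}=\{x_1=x_2=x_3=x_4=0\}$ is a single reduced point (hence ``smooth of codimension $4$''), and $f=\det B=x_1^2x_4+x_1x_4^3-x_2x_3$ has critical locus exactly the origin, so $D_n\setminus D_{n-1}$ is smooth of codimension $1$; yet $db_{22}(0)=0$, the Hessian of $f$ has rank $2$, and the origin is not an ODP (its Milnor number is $6$). So your inference needs the actual content of generality used throughout the paper: the transversality formulation of Parusi\'nski--Pragacz recalled in Remark \ref{generalPP} (equivalently, smoothness of $D_{n-1}$ as the \emph{scheme} cut out by the $n\times n$ minors). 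In your normal form, transversality of $s_\sigma$ to $\bD_{n-1}\setminus\bD_{n-2}$ at $p$ is precisely surjectivity of $dB(p)$, i.e.\ independence of the four $db_{ij}(p)$; equivalently, scheme-smoothness says $(b_{11},b_{12},b_{21},b_{22})$ is the maximal ideal of $\sO_{M,p}$, so the $b_{ij}$ form a regular system of parameters. With that substitution your coordinate argument closes the proof.

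Once this is repaired, your route is genuinely different from the paper's. The paper argues globally: it computes $\mu_{IH}(D_n(\sigma))=2\,|D_{n-1}(\sigma)|$ (Corollary \ref{maincoro}), expresses this as $\sum_{x}(\mu_0(D_n,x)+\mu(D_n,x))$ over the singular points (Proposition \ref{interMuIH}), and uses $\mu,\mu_0\geqslant 1$ to force $\mu(D_n,x)=1$ everywhere. Your local argument avoids the Euler-characteristic machinery and does not need properness of $M$ or irreducibility of $D_n(\sigma)$; on the other hand, the global count also yields $\mu_0(D_n,x)=1$ at each node, i.e.\ irreducibility of the exceptional curve of the small resolution $Z_n(\sigma)\to D_n(\sigma)$ over each singular point, which your computation does not address.
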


This gives us a way to construct $3$-dimensional Calabi--Yau (or Fano) hypersurfaces with ODPs (see Example \ref{exampleCY} and \ref{exampleFano}) admitting a natural small resolution (see \eqref{constrsmallreso} and Proposition \ref{chernZ}).


To prove Theorem \ref{introODP}, we will give a formula for the intersection homology Euler characteristic of $D_n (\sigma)$, for a $n$-general morphism $\sigma$. The global topology of the determinantal hypersurface imposes strong restrictions on the singularities $D_n (\sigma)$ can have.

We set $\sL \coloneqq \det \sE^{\vee} \otimes \det \sF$. Note that the maximal degeneracy locus $D_n(\sigma)$ is defined by $\det (\sigma)$ associated to a global section of $\sL$. It is known that
$$
\chi (M | \sL) \coloneqq \int_M c_1(\sL) (1 + c_1(\sL))^{- 1} c(T_M)
$$
is the topological Euler characteristic of the smooth hypersurface defined by a global section of $\sL$ (cf.~\cite[Example 4.2.6]{fulton98}).

Let $d \coloneqq \dim M \geqslant 4$. We let $\chi_{IH} (-)$ denote the intersection homology Euler characteristic. As an analogy of the generalized Milnor number of singular hypersurfaces in \cite{PP95b}, we write 
\begin{equation*} \label{muintro}
    \mu_{IH}(D_n(\sigma), M) := (- 1)^d ( \chi_{IH} (D_{n}(\sigma)) - \chi(M | \sL)).
\end{equation*}
In terms of Schur polynomials $s_{\lambda} (\sF - \sE)$ (see Notation \ref{notationSeg}) we prove:

\begin{theorem}[Theorem \ref{mainresultpf}] \label{mainintro}
If $\sigma : \sE \to \sF$ is $n$-general, then
$$
\mu_{IH}(D_n(\sigma), M) = \sum_{\lambda \supseteq (2,2)} (- 1)^{d + |\lambda|} f^{\lambda} \int_M s_{\lambda} (\sF - \sE) \, c_{d - | \lambda |} (T_M),
$$
where $f^{\lambda}$ denotes the number of standard Young tableaux of shape $\lambda$.
\end{theorem}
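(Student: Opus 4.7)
The approach is to leverage the small resolution $\pi : Z \to D_n(\sigma)$ from \eqref{constrsmallreso}: with $\pi_Y : \bP(\sE) \to M$, the subvariety $Z \subset \bP(\sE)$ is cut out by the section of $\pi_Y^\ast \sF \otimes \sO_{\bP(\sE)}(1)$ induced by $\sigma$, and under $n$-generality $Z$ is smooth of dimension $d - 1$. The fiber of $\pi$ over a point in $D_i(\sigma) \setminus D_{i - 1}(\sigma)$ is the projectivized kernel $\bP(\ker \sigma) \cong \bP^{n - i}$, so the locus where the fiber has dimension at least $k$ is $D_{n - k}(\sigma)$, of codimension $(k + 1)^2 - 1 > 2 k$ in $D_n(\sigma)$. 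Hence $\pi$ is small; by the decomposition theorem $R\pi_\ast \bQ_Z[d - 1] \cong \IC_{D_n(\sigma)}$, and therefore $\chi_{IH}(D_n(\sigma)) = \chi(Z)$.

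Next, by Proposition \ref{chernZ}, one computes $\chi(Z)$ via Chern--Gauss--Bonnet on the smooth complete intersection $Z = \{ \bar\sigma = 0 \} \subset \bP(\sE)$:
$$
\chi(Z) = \int_{\bP(\sE)} c_{n + 1}\bigl(\pi_Y^\ast \sF \otimes \sO(1)\bigr) \cdot \left[\frac{c(T_{\bP(\sE)})}{c(\pi_Y^\ast \sF \otimes \sO(1))}\right]_{d - 1}.
$$
Using the relative Euler sequence to expand $c(T_{\bP(\sE)})$, expanding $c(\pi_Y^\ast \sF \otimes \sO(1))$ binomially in $\zeta = c_1(\sO_{\bP(\sE)}(1))$, and pushing forward via $\pi_{Y\ast}(\zeta^{n + k}) = s_k(\sE)$ (the $k$-th Segre class of $\sE$), both $\chi(Z)$ and $\chi(M \mid \sL) = \int_M c_1(\sL)(1 + c_1(\sL))^{-1} c(T_M)$ reduce to integrals on $M$ of polynomials in $c(\sE)$, $c(\sF)$ and $c(T_M)$.

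The principal obstacle is then combinatorial: to regroup the difference $(-1)^d(\chi(Z) - \chi(M \mid \sL))$ in the Schur basis $\{\, s_\lambda(\sF - \sE)\,\}$ and identify the coefficient of $s_\lambda(\sF - \sE)\, c_{d - |\lambda|}(T_M)$ as $(-1)^{d + |\lambda|} f^\lambda$. The plan is to apply the Jacobi--Trudi formula to convert monomials in Chern classes into Schur polynomials, and to invoke the exponential generating-function identity
$$
\sum_{\lambda} f^\lambda s_\lambda(x)\, \frac{t^{|\lambda|}}{|\lambda|!} \;=\; \exp\!\Bigl( t \sum_i x_i \Bigr)
$$
(equivalently, $\sum_{\lambda\vdash k} f^\lambda s_\lambda = p_1^k$) specialized so that $p_1(\sF - \sE) = c_1(\sL)$, in order to extract the hook factor $f^\lambda$. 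The partitions $\lambda$ with $\lambda_2 \leqslant 1$ should collectively resum to exactly $(-1)^d \chi(M \mid \sL)$ and thus cancel, leaving only the contributions from $\lambda \supseteq (2, 2)$; this is consistent with the fact that $s_{(2, 2)}(\sF - \sE) = [D_{n - 1}(\sigma)]$ is, by Thom--Porteous, the first obstruction class to smoothness of $D_n(\sigma)$. I expect the detailed verification to parallel in spirit the approach of \cite{PP95b} in the hypersurface case, here adapted via the Schur-polynomial generating machinery on the projective bundle $\bP(\sE)$.
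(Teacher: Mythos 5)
Your overall strategy --- replace $\chi_{IH}(D_n(\sigma))$ by $\chi(Z)$ via the small resolution and smallness of $\pi$, subtract $\chi(M \mid \sL)$, and regroup the difference in the Schur basis --- is the same skeleton as the paper's proof (the only cosmetic difference being that you projectivize kernels inside $\bP(\sE)$ rather than cokernels inside $\bP(\sF)$, and appeal to the decomposition theorem instead of Goresky--MacPherson for $\chi_{IH}(D_n(\sigma)) = \chi(Z)$). The genuine gap is that the entire content of the theorem is the step you label the ``principal obstacle'' and then leave as a plan. What is needed, and what you do not establish, is the analogue of \eqref{pushfwardch}: that the pushforward to $M$ of the total Chern class of $Z$ expands \emph{purely} in hook Schur polynomials, $p_{\ast}\jmath_{\ast} c(Z) = \sum_{i + j \leqslant d - 1} (-1)^{i + j}\binom{i + j}{i} s_{1 + i, (1)^j}(\sF - \sE) \cap c(M)$, with $\binom{i + j}{i} = f^{(1 + i, (1)^j)}$ by the hook length formula (Proposition \ref{hookleng}). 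The paper obtains this from Parusi\'nski--Pragacz (\cite[Proposition 2.5]{PP95a}, quoted as \eqref{pushfwardch} in Proposition \ref{chernZ}); your route of a direct Chern--Gauss--Bonnet computation on $\bP(\sE)$ followed by ``Jacobi--Trudi plus the generating-function identity'' names tools but does not carry out the reduction of the resulting mixed polynomial in $c(\sE)$, $c(\sF)$, $c(T_M)$ and Segre classes to the basis $s_{\lambda}(\sF - \sE)\, c_{d - |\lambda|}(T_M)$, and that reduction is exactly where the work lies.

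Moreover, the cancellation you predict is stated backwards. In the correct bookkeeping the Fulton-class side gives $\iota_{\ast} c^F(D) = \sum_{\ell \geqslant 0} (-1)^{\ell} s_{(1)}(\sF - \sE)^{\ell + 1} \cap c(M)$, which by Lemma \ref{simpPieri} (your identity $p_1^{k} = \sum_{|\lambda| = k} f^{\lambda} s_{\lambda}$) expands over \emph{all} shapes $\lambda$, while the $\chi(Z)$ side contributes only hook shapes; it is the hook part of the expansion of $\chi(M \mid \sL)$ that cancels against $\chi(Z)$, leaving $(-1)^{|\lambda|} f^{\lambda} s_{\lambda}$ exactly for $\lambda \supseteq (2, 2)$. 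Your claim that ``the partitions with $\lambda_2 \leqslant 1$ collectively resum to $(-1)^d \chi(M \mid \sL)$'' cannot be right as stated, since the hook terms alone miss the non-hook part of $s_{(1)}^{\ell + 1}$, and if it were literally true the difference would not produce the asserted formula. So either invoke \eqref{pushfwardch} directly, as the paper does, or actually perform the projective-bundle pushforward and the Schur regrouping; as written, the proposal is an outline whose central computation is missing.
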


Here, we write $\lambda \supseteq \mu$ if the Young diagram of $\mu$ is contained in that of $\lambda$ and the integer $|\lambda|$ for the total number of boxes of the diagram $\lambda$. We adopt the convention that $c_i (T_M) = 0$ for $i < 0$.

In the following cases, $\mu_{IH}(D_n(\sigma), M)$ is expressed in terms of the singular locus of $D_n(\sigma)$, being $D_{n - 1}(\sigma)$ (see Remark \ref{singdeg}), for a $n$-general $\sigma$.

\begin{corollary} [Corollary \ref{maincoro}]
Assume that the smooth proper variety $M$ satisfies one of the following assumptions 
\begin{inparaenum}
    \item \label{3D} $\dim M = 4$, or
    \item \label{4D} $\dim M = 5$ and 
    \begin{equation} \label{cycondiintr}
        c_1(T_M) = c_1(\sF) - c_1(\sE).
    \end{equation}
\end{inparaenum}
If $\sigma$ is $n$-general, then
$$
\mu_{IH}(D_n(\sigma), M)  = (\dim M - 2) \int_{M} c(T_M) \cap [D_{n - 1} (\sigma)].
$$
\end{corollary}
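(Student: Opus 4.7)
The plan is to specialize Theorem~\ref{mainintro} to the two dimensions, using that $c_i(T_M) = 0$ for $i < 0$ truncates the Schur-indexed sum. Since $\lambda \supseteq (2,2)$ forces $|\lambda| \geqslant 4$, only $|\lambda| \in \{4, \ldots, d\}$ can contribute. The target expression involves $[D_{n-1}(\sigma)]$, which by Thom--Porteous (applied to the equal-rank morphism $\sigma$, with the rectangle $((n{+}1)-(n{-}1))^{(n{+}1)-(n{-}1)} = (2,2)$) equals $s_{(2,2)}(\sF - \sE) \cap [M]$; the cycle $[D_{n-1}(\sigma)]$ has codimension $4$ in $M$, hence is a $(d-4)$-cycle.

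For case \eqref{3D}, the only partition satisfying $(2,2) \subseteq \lambda$ with $|\lambda| \leqslant 4$ is $\lambda = (2,2)$ itself, with $f^{(2,2)} = 2$ and sign $(-1)^{4+4} = 1$, so Theorem~\ref{mainintro} gives $\mu_{IH}(D_n(\sigma),M) = 2\int_M s_{(2,2)}(\sF - \sE)$. As $[D_{n-1}(\sigma)]$ is a zero-cycle, $c(T_M) \cap [D_{n-1}(\sigma)] = [D_{n-1}(\sigma)]$, and the claim follows from $\dim M - 2 = 2$.

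For case \eqref{4D}, the allowed partitions are $(2,2)$ of size $4$ and $(3,2), (2,2,1)$ of size $5$, with $f^{(2,2)} = 2$ and $f^{(3,2)} = f^{(2,2,1)} = 5$ by the hook-length formula. Tracking the signs $(-1)^{5+|\lambda|}$, Theorem~\ref{mainintro} becomes
$$
\mu_{IH}(D_n(\sigma),M) = -2 \int_M s_{(2,2)}(\sF-\sE)\, c_1(T_M) + 5 \int_M \bigl(s_{(3,2)} + s_{(2,2,1)}\bigr)(\sF - \sE).
$$
The key step is the Pieri identity $s_{(2,2)} \cdot s_{(1)} = s_{(3,2)} + s_{(2,2,1)}$, which, combined with $s_{(1)}(\sF - \sE) = c_1(\sF) - c_1(\sE) = c_1(T_M)$ by hypothesis \eqref{cycondiintr}, collapses the two size-$5$ terms into $s_{(2,2)}(\sF - \sE) \cdot c_1(T_M)$. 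The sum therefore equals $3 \int_M s_{(2,2)}(\sF - \sE) \, c_1(T_M)$. On the other side, $[D_{n-1}(\sigma)]$ is a one-cycle, so $\int_M c(T_M) \cap [D_{n-1}(\sigma)] = \int_M c_1(T_M) \cdot s_{(2,2)}(\sF - \sE)$, and $\dim M - 2 = 3$ yields the stated equality.

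The only non-routine ingredient is the Pieri collapse, and recognising that condition~\eqref{cycondiintr} is precisely what turns $s_{(1)}(\sF - \sE)$ into $c_1(T_M)$, so that the two partitions of size $5$ pair up with the lower-order $(2,2)$ contribution in the correct proportion. Everything else is bookkeeping with signs, hook lengths, and Thom--Porteous.
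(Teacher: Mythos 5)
Your proposal is correct and follows essentially the same route as the paper: specialize Theorem \ref{mainresultpf} to $d=4$ and $d=5$, use $f^{(2,2)}=2$, $f^{(3,2)}=f^{(2,2,1)}=5$ from the hook-length formula, collapse the size-$5$ terms via Pieri $s_{(1)}\cdot s_{(2,2)}=s_{(3,2)}+s_{(2,2,1)}$ together with the Calabi--Yau condition $s_{(1)}(\sF-\sE)=c_1(T_M)$, and finish with the Giambelli--Thom--Porteous identity $[D_{n-1}(\sigma)]=s_{(2,2)}(\sF-\sE)\cap[M]$. No gaps; the sign and degree bookkeeping matches the paper's computation.
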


The equality \eqref{cycondiintr} is called the \emph{Calabi--Yau condition}.

The above results are motivated by considering the special case where the bundle $\sF$ is trivial and $\sE$ is a direct sum of line bundles. We proved the case \ref{3D} in this situation \cite[Proposition 3.4]{SSW2016}. It supply a proof of the famous result that all $3$-dimensional Calabi--Yau complete intersections in product of projective spaces are connected through conifold transitions (see \cite{GH88} or \cite[Theorem 5.6]{SSW2016}). When the rank of $\sF$ is two, the case \ref{4D} was proved in \cite{BLS97} for $M$ being a complete intersection in a product of projective spaces.

The paper is organized as follows. In Section \ref{prelsec}, we review some basic materials on Young diagrams, Schur polynomials and intersection cohomology groups. Section \ref{detersec} contains a brief exposition of the required $n$-generality of degeneracy loci. We conclude with Section \ref{appexamsec} where we prove main results and derive interesting formulas (see Proposition \ref{interprod}) for maximal degeneracy loci under the Calabi--Yau condition. 

\medskip

\textbf{Acknowledgements.}
The author is greatly indebted to Prof.~Chin-Lung Wang, Prof.~Baosen Wu and Prof.~Ching-Jui Lai for many useful discussions, and to Dr.~Yinbang Lin for valuable comments. The author would like to thank the anonymous referee for comments and suggestions and Prof.~I. Cheltsov for his help. The author also thanks Tsinghua University, Southeast University, Yau Mathematical Sciences Center and Shing-Tung Yau Center and School of Mathematics for providing support and a stimulating environment.

\section{Preliminaries} \label{prelsec}

Throughout this paper all varieties are irreducible and defined over complex field $\bC$.


\subsection{Young Diagrams} \label{yd}

Let $\lambda = (\lambda_1 \geqslant \lambda_2 \geqslant \cdots \geqslant \lambda_k \geqslant 0)$ be a partition of size $|\lambda| = \sum \lambda_i$, identified with its Young diagram. It consists of $|\lambda|$ boxes arranged in $k$ left adjusted rows of length $\lambda_1, \cdots, \lambda_k$. We abbreviate$(i, \cdots, i)$ ($k$-times) to $(i)^k$ and $(\lambda_1, \lambda_2, \cdots, i + \mu_1, \cdots, i + \mu_k)$ to $\lambda, (i)^k + \mu$. If $\lambda$ and $\mu$ are two Young diagrams, we write $\mu \subseteq \lambda$ if the diagram of $\mu$ is contained in that of $\lambda$, that is, $\mu_i \leqslant \lambda_i$ for all $i$.





A \emph{standard Young tableau} with shape $\lambda$ is a numbering of a Young diagram $\lambda$ with $1, 2, \cdots , |\lambda|$ in which the numbers appear in ascending order within each row or column from left to right and top to bottom. Let $f^{\lambda}$ denote the number of standard Young tableaux of shape $\lambda$. The numbers $f^{\lambda}$ satisfy the following inductive formula 
\begin{equation} \label{induSTY} 
f^{\mu} = \sum_{\substack{|\lambda| = |\mu| - 1 \\ \lambda \subseteq \mu}} f^{\lambda},   
\end{equation}
since giving a standard tableau with $n$ boxes is the same as giving one with $n - 1$ boxes and saying where to put the $n$th box.

For a Young diagram, each box determines a \emph{hook}, which consists of that box and all boxes in its column below the box or in its row to right of the box. The \emph{hook length} of a box is the number of boxes in its hook. Let $h(\lambda)$ be the product of the hook lengths of all boxes in $\lambda$. We have the following closed formula for the number $f^{\lambda}$, the hook length formula \cite[p. 53]{fulton97}.

\begin{proposition} [\cite{fulton97}] \label{hookleng}
If $\lambda$ is a Young diagram with $n$ boxes, then the number $f^{\lambda}$ of standard Young tableaux with shape $\lambda$ is $n !$ divided by the product of the hook lengths of the boxes. In particular, $f^{\lambda} = \binom{n - 1}{k - 1}$ for each hook $\lambda = (k, 1, \cdots, 1)$.
\end{proposition}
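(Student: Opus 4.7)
The plan is to prove the formula $f^{\lambda} = n!/h(\lambda)$ by induction on $n = |\lambda|$, exploiting the recursion \eqref{induSTY}. Setting $F^{\lambda} \coloneqq n!/h(\lambda)$, the base case $n = 1$ is immediate, and it suffices to verify that $F$ satisfies the same recursion as $f$, namely
\begin{equation*}
F^{\mu} = \sum_{\lambda \nearrow \mu} F^{\lambda},
\end{equation*}
where $\lambda \nearrow \mu$ means that $\lambda$ is obtained from $\mu$ by deleting a single corner box. After clearing denominators this is equivalent to the purely combinatorial identity
\begin{equation*}
\sum_{c} \frac{h(\mu)}{|\mu| \cdot h(\mu \setminus c)} = 1,
\end{equation*}
the sum running over corners $c$ of $\mu$.

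The real content of the argument is the verification of this identity, which I would carry out via the probabilistic proof of Greene--Nijenhuis--Wilf. Define a random hook walk on $\mu$ by first picking a box uniformly at random, and then, while the current box is not a corner, jumping to a uniformly chosen box in its hook other than itself. Every such walk terminates in finitely many steps at some corner. A careful bookkeeping of the transition probabilities, most conveniently organised by tracking the row and column coordinates of successive positions, shows that the walk ends at corner $c$ with probability precisely $h(\mu) / (|\mu| \cdot h(\mu \setminus c))$. Since the walk almost surely terminates, summing over $c$ yields the desired identity. The main obstacle is this transition-probability computation; it is not deep but requires careful indexing, and it is the heart of the GNW argument.

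The ``in particular'' claim follows by direct computation: for $\lambda = (k, 1, \ldots, 1)$ with $n$ boxes in total, the box $(1,1)$ has hook length $n$, the remaining boxes in the first row have hook lengths $k-1, k-2, \ldots, 1$, and the remaining boxes in the first column have hook lengths $n-k, n-k-1, \ldots, 1$. Hence $h(\lambda) = n \cdot (k-1)! \cdot (n-k)!$, and substituting into the hook length formula gives $f^{\lambda} = n!/h(\lambda) = \binom{n-1}{k-1}$.
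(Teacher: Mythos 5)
Your overall strategy is sound, and most of the scaffolding is correct: setting $F^{\lambda} \coloneqq |\lambda|!/h(\lambda)$, the recursion \eqref{induSTY} together with the base case does determine $f^{\lambda}$ uniquely, the equivalence of $F^{\mu} = \sum_{\lambda \nearrow \mu} F^{\lambda}$ with the corner identity $\sum_{c} h(\mu)/(|\mu|\, h(\mu \setminus c)) = 1$ is a correct clearing of denominators, and the hook-length computation for $\lambda = (k, 1, \cdots, 1)$ (hook length $n$ at the corner box, $k-1, \cdots, 1$ along the arm, $n-k, \cdots, 1$ down the leg, hence $h(\lambda) = n\,(k-1)!\,(n-k)!$ and $f^{\lambda} = \binom{n-1}{k-1}$) is right. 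Note that the paper itself offers no proof of this proposition --- it is quoted from Fulton's book, where the formula is deduced from a Frobenius-type determinantal expression for $f^{\lambda}$ --- so your Greene--Nijenhuis--Wilf route is in any case a different path from the cited one.

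The genuine gap is exactly the step you defer: the assertion that the hook walk terminates at a given corner $c$ with probability $h(\mu)/(|\mu|\, h(\mu \setminus c))$ is stated, not proved, and it is the entire mathematical content of the GNW argument. ``Careful bookkeeping of the transition probabilities'' is a placeholder for the real work: one must condition on the set of rows $I$ and columns $J$ visited by the walk, show that the probability of a walk with projections $(I, J)$ ending at $c = (\alpha, \beta)$ factors as $\prod_{i \in I \setminus \{\alpha\}} \frac{1}{h_{i \beta} - 1} \prod_{j \in J \setminus \{\beta\}} \frac{1}{h_{\alpha j} - 1}$ (proved by induction on the length of the walk), and then sum over all $(I, J)$ to recognize the product $\prod_i \bigl(1 + \tfrac{1}{h_{i\beta}-1}\bigr) \prod_j \bigl(1 + \tfrac{1}{h_{\alpha j}-1}\bigr)$, which matches $h(\mu)/h(\mu \setminus c)$ after comparing how hook lengths change when the corner is removed. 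As written, you have only verified that the hook-length candidate would satisfy the recursion \emph{if} the corner identity holds; to have a proof you must supply this termination-probability lemma (or replace it by some other verification of the corner identity).
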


Let us illustrate the above definitions and notations with an example.

\begin{example}
$\lambda = (3, 2)$ is drawn as \ytableausetup{smalltableaux}\ydiagram{3, 2}. The number $f^{\lambda}$ of standard Young tableaux is $5$, which has the inductive formula $f^{(3, 2)} = f^{(3, 1)} + f^{(2,2)}$.

\ytableausetup{nosmalltableaux}
\begin{center}
\ytableausetup{centertableaux}
\begin{ytableau}
1 & 2 & 3 \\
4 & 5 
\end{ytableau}
\ytableausetup{centertableaux}
\begin{ytableau}
1 & 2 & 4 \\
3 & 5 
\end{ytableau}
\ytableausetup{centertableaux}
\begin{ytableau}
1 & 3 & 4 \\
2 & 5 
\end{ytableau}
\ytableausetup{centertableaux}
\begin{ytableau}
1 & 2 & 5 \\
3 & 4 
\end{ytableau}
\ytableausetup{centertableaux}
\begin{ytableau}
1 & 3 & 5 \\
2 & 4 
\end{ytableau}
\end{center}
The product of the hook lengths of all boxes in $\lambda$ is $h(\lambda) = 4 \cdot 3 \cdot 1 \cdot 2 \cdot 1 = 24$. 
\end{example}

\subsection{Schur Polynomials} \label{spoly}

Let $\{c_i\}_{i \in \bN}$ and $\{s_i\}_{i \in \bN}$ be commuting variables and $c_0 = s_0 = 1$, related by the identity $(\sum_{i = 0}^{\infty} c_i t^i) \cdot (\sum_{i = 0}^{\infty} (- 1)^i s_i t^i) = 1$.

\begin{example} \label{chernsym}
For vector bundles $\sE$ and $\sF$, write 
$$
c(\sF - \sE) := c(\sF) / c(\sE) = 1 + (c_1(\sF) - c_1(\sE)) + \cdots
$$
and let $c_i(\sF - \sE)$ be the $i$th term in this expansion. If $c_i = c_i (\sF - \sE)$, then 
$$
s_i = (- 1)^i c_i (\sE - \sF) = c_i (\sE^{\vee} - \sF^{\vee})
$$ 
and thus $s_1 = c_1$. In particular, if $\sE = \sO$, then $s_i = s_i (\sF^{\vee})$ are the Segre classes of the dual bundle $\sF^{\vee}$.
\end{example}

Given a partition $\lambda = (\lambda_1, \cdots, \lambda_k)$, we define the corresponding \emph{Schur polynomial}
\begin{equation*} 
    s_{\lambda} := \det (s_{\lambda_i - i + j})_{1 \leqslant i, j \leqslant k}.
\end{equation*}
Note that adding an arbitrary string of zeros to $\lambda$ does not change $s_{\lambda}$. 

\begin{notation} \label{notationSeg}
We will denote the Schur polynomial by $s_{\lambda}(\sF - \sE)$  associated with $s_i = c_i (\sE^{\vee} - \sF^{\vee})$. To distinguish between these and Segre classes , we will use $s_{(i)}(\sF -\sE)$ to denote $s_i = c_i (\sE^{\vee} - \sF^{\vee})$ for all $i \geqslant 1$.
\end{notation}
 
The following lemma is a consequence of Pieri's formula (cf.~\cite[p. 24]{fulton97} or \cite[Lemma 14.5.2]{fulton98}),
\begin{equation} \label{Pieri}
    s_{1} \cdot s_{\lambda} = \sum\nolimits_{\mu} s_{\mu}
\end{equation}
where the sum is over all partitions $\mu$ such that $\mu \supseteq \lambda$ and $|\mu| = |\lambda| + 1$.

\begin{lemma} \label{simpPieri}
For $\ell \in \bN$, $s_{1}^{\ell} = \sum_{|\lambda| = \ell} f^{\lambda} s_{\lambda}$.
\end{lemma}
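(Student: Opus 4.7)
The plan is to prove the identity by induction on $\ell$, using exactly the two ingredients that have just been recorded: Pieri's formula \eqref{Pieri} and the inductive recursion \eqref{induSTY} for the numbers $f^{\lambda}$.

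The base case $\ell = 0$ is immediate, since $s_{1}^{0} = 1 = s_{\emptyset}$ and $f^{\emptyset} = 1$ (the empty Young diagram has a unique standard tableau). For $\ell = 1$ the claim reduces to $s_1 = f^{(1)} s_{(1)}$, which is trivial.

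For the inductive step, I would assume the formula for $\ell - 1$ and write
$$
s_{1}^{\ell} \;=\; s_{1} \cdot s_{1}^{\ell - 1} \;=\; \sum_{|\lambda| = \ell - 1} f^{\lambda} \, (s_{1} \cdot s_{\lambda}).
$$
Applying Pieri's formula \eqref{Pieri} to each product $s_{1} \cdot s_{\lambda}$ and swapping the order of summation, the coefficient of $s_{\mu}$ (for $|\mu| = \ell$) becomes
$$
\sum_{\substack{|\lambda| = \ell - 1 \\ \lambda \subseteq \mu}} f^{\lambda},
$$
which by the recursion \eqref{induSTY} is exactly $f^{\mu}$. This gives $s_{1}^{\ell} = \sum_{|\mu| = \ell} f^{\mu} s_{\mu}$, completing the induction.

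There is no genuine obstacle here: both \eqref{Pieri} and \eqref{induSTY} have been stated in the preceding paragraphs, so the proof is essentially a bookkeeping exercise combining them. The only point deserving a moment of care is the swap of summation, i.e.\ verifying that a pair $(\lambda,\mu)$ with $\lambda \subseteq \mu$, $|\lambda| = \ell - 1$, $|\mu| = \ell$ is counted in the same way on both sides, which is immediate from the definitions.
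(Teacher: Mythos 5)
Your proof is correct and follows essentially the same route as the paper: induction on $\ell$, Pieri's formula \eqref{Pieri} applied termwise, an exchange of summation, and the recursion \eqref{induSTY} to identify the coefficient of $s_{\mu}$ as $f^{\mu}$. Nothing further is needed.
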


\begin{proof}
We argue by induction on $\ell$. From \eqref{Pieri} and the induction hypothesis, we get 
\begin{align*}
    s_{1}^{} \cdot s_{1}^{ \ell - 1} &= \sum_{|\lambda| = \ell - 1} f^{\lambda} s_{1} \cdot s_{\lambda} \\
    &= \sum_{|\lambda| = \ell - 1} f^{\lambda} (\sum_{\substack{|\mu| = \ell \\ \mu \supseteq \lambda}} s_{\mu}) = \sum_{|\mu| = \ell} (\sum_{\substack{|\lambda| = \ell - 1 \\ \mu \supseteq \lambda}} f^{\lambda}) s_{\mu}.
\end{align*}
The lemma now follows immediately from the inductive formula \eqref{induSTY}.
\end{proof}



\subsection{Intersection Cohomology Groups}





Let $V$ be a variety of dimension $m$. Let $\IC^{\bullet}_V$ denote the \emph{intersection cohomology complex} for the (lower) middle perversity on $V$ (see \cite[\S 2.1]{GM83} or \cite[\S 3]{GM83} for Deligne's construction). If $V$ is smooth, then $\IC^{\bullet}_V = \bQ_{V} [2m]$ is just the constant sheaf $\bQ$ put in degree $2m$. For an integer $i \geqslant 0$, the intersection cohomology group $IH^i (V)$ is defined in terms of the hypercohomology groups of $\IC^{\bullet}_V$, that is, $$
IH^i(V) \coloneqq \mathscr{H}^{i -2m} (V, \IC^{\bullet}_V).
$$

\begin{example} \label{intcoh}
Let $V$ be a proper variety of dimension $m$ with isolated singularities and let $\Sigma = \Sing (V)$. For the lower middle perversity, we have the following description of intersection cohomology groups in terms of ordinary ones (cf. \cite[Proposition 4.4.1]{KW06}):
\begin{displaymath}
IH^i (V) = \left\{
\begin{array}{ll}
    H^i(V) & i > m, \\
    \Img (H^i(V) \to H^i(V \setminus \Sigma)) & i = m, \\
    H^i(V \setminus \Sigma) & i < m.
\end{array}
\right.
\end{displaymath}
\end{example}

For an isolated singularity $a \in V$, let us denote by $\mu_0 (V, a)$ the $(m - 1)$th Betti number of the link of the germ of the singularity $(V, a)$. The following proposition is a straightforward generalization of \cite[(6.4.23)]{Dimca92}, where it is stated for hypersurfaces.


\begin{proposition}[\cite{Dimca92}] \label{differU}
Let $m \geqslant 3$ and $V$ be an $m$-dimensional proper variety with only isolated complete intersection singularities. Then 
\begin{equation*}
    \chi_{IH} (V) = \chi (V) + (- 1)^{m + 1} \sum_{a \in \Sing V} \mu_0 (V, a),
\end{equation*}
where $\chi_{IH}(-)$ is the intersection homology Euler characteristic.
\end{proposition}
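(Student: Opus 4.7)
The plan is to compute $\chi_{IH}(V) - \chi(V)$ directly from the description of intersection cohomology given in Example \ref{intcoh}, using the long exact sequence of the pair $(V, V \setminus \Sigma)$ with $\Sigma = \Sing V$.

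First I would localize the pair cohomology at the singularities. By excision,
$$H^i(V, V \setminus \Sigma) \cong \bigoplus_{a \in \Sigma} H^i(U_a, U_a \setminus \{a\}),$$
where $U_a$ is a contractible conical neighborhood of $a$ with $U_a \setminus \{a\}$ homotopy equivalent to the link $L_a$, so $H^i(U_a, U_a \setminus \{a\}) \cong \widetilde{H}^{i-1}(L_a)$. The essential input on the topology of ICIS germs is that for $m \geqslant 3$ the link $L_a$ of an isolated complete intersection singularity of dimension $m$ is $(m-2)$-connected. Combined with Poincar\'e duality on the $(2m-1)$-manifold $L_a$, this gives $\widetilde{H}^j(L_a) = 0$ except possibly for $j \in \{m-1, m\}$, where both groups have dimension $\mu_0(V, a)$. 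Consequently $H^i(V, V \setminus \Sigma) = 0$ for $i \notin \{m, m+1\}$, while both $H^m$ and $H^{m+1}$ have dimension $\mu := \sum_{a \in \Sigma} \mu_0(V, a)$.

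Next I would extract Betti number identities from the long exact sequence. For $i \leqslant m-2$ and $i \geqslant m+2$ we obtain $H^i(V) \cong H^i(V \setminus \Sigma)$. The interesting portion is the six-term exact sequence
$$0 \to H^{m-1}(V) \to H^{m-1}(V \setminus \Sigma) \to \bQ^{\mu} \to H^m(V) \to H^m(V \setminus \Sigma) \to \bQ^{\mu} \to H^{m+1}(V) \to H^{m+1}(V \setminus \Sigma) \to 0.$$
Setting $\alpha = \dim \mathrm{Img}(H^m(V) \to H^m(V \setminus \Sigma))$, a short diagram chase yields
$$\alpha = b^m(V) - \mu + \bigl(b^{m-1}(V \setminus \Sigma) - b^{m-1}(V)\bigr).$$

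Finally I would combine these with Example \ref{intcoh}, which splits $\chi_{IH}(V)$ as
$$\sum_{i \leqslant m-1} (-1)^i b^i(V \setminus \Sigma) + (-1)^m \alpha + \sum_{i \geqslant m+1} (-1)^i b^i(V).$$
Subtracting $\chi(V) = \sum_{i} (-1)^i b^i(V)$, all terms cancel outside degrees $m-1$ and $m$ by the isomorphisms above, leaving
$$\chi_{IH}(V) - \chi(V) = (-1)^{m-1} \bigl(b^{m-1}(V \setminus \Sigma) - b^{m-1}(V)\bigr) + (-1)^m (\alpha - b^m(V)).$$
Plugging in the formula for $\alpha$ collapses this to $(-1)^{m+1}\mu$, as claimed. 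The only non-routine ingredient is the $(m-2)$-connectedness of the link of an ICIS; this is classical (it follows from the Milnor fibration for ICIS, under which the Milnor fibre is a bouquet of $m$-spheres, together with the long exact sequence connecting link and Milnor fibre), and I would invoke it as a known topological fact rather than reprove it.
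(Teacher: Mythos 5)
Your proof is essentially the paper's argument: the paper runs the long exact sequence of cohomology with supports $H^i_{\Sigma}(V)\to H^i(V)\to H^i(V\setminus\Sigma)$ (the same as your pair sequence via $H^i_{\Sigma}(V)\cong H^i(V,V\setminus\Sigma)$), localizes at the singular points so that $H^i_{\{a\}}(V)\cong\widetilde{H}^{i-1}(L_a)$, and invokes Hamm's $(m-2)$-connectedness of the link, exactly as you do, so the proposal is correct and follows the same route. The one inaccuracy is the claim that $\widetilde{H}^{j}(L_a)$ vanishes outside $j\in\{m-1,m\}$: since $L_a$ is a closed oriented $(2m-1)$-manifold, $\widetilde{H}^{2m-1}(L_a)\cong\bQ$, so $H^{i}(V,V\setminus\Sigma)$ does not vanish for $i=2m$ and $H^{i}(V)\not\cong H^{i}(V\setminus\Sigma)$ there; this is harmless, because your final cancellation only uses the vanishing in degrees $i\leqslant m-1$ and the count $\dim H^{m}(V,V\setminus\Sigma)=\sum_{a}\mu_0(V,a)$, both of which are correct.
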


For the convenience of the reader, we include a proof.


\begin{proof}
Let $\Sigma = \Sing (V)$, and let $h^i(V)$, $h^i_{\Sigma}(V)$ and $Ih^i(V)$ be the dimension of $H^i(V)$, $H^i_{\Sigma}(V)$ and $IH^i(V)$ respectively. According to Example \ref{intcoh} and the following long exact sequence 
\begin{equation*} \label{mhslong}
   \cdots \to H^i_{\Sigma}(V) \to H^i(V) \to H^i(V \setminus \Sigma) \to \cdots \to H^m
    (V) \to IH^m (V) \to 0,
\end{equation*}
it follows that 
\begin{align*}
    \chi_{IH}(V) - \chi(V) &= \sum_{i < m} (- 1)^i (h^i (V \setminus \Sigma) - h^i (V)) + (- 1)^m (Ih^m (V) - h^m (V)) \\
    &= \sum_{i = 0}^m (- 1)^{i + 1} h^i_{\Sigma} (V) = \sum_{i = 0}^m (- 1)^{i + 1} \left(\sum_{a  \in \Sigma} h^{i}_{\{ a \}} (V) \right).
\end{align*}
Since $V$ has only isolated singularities, the $(i - 1)$th reduced cohomology group of the link of the germ $(V, a)$ is isomorphic to $H^i_{\{ a \}} (V)$ (by replacing $V$ with a contractible open neighborhood of $a \in \Sigma$). Then the proposition follows from the fact that the link of $(V, a)$, being a local complete intersection singularity, is $(m - 2)$-connected (see \cite[Corollary 1.3]{Hamm71}).
\end{proof}

\begin{remark} \label{mu0defect}
For an isolated \emph{rational} singularity $(V, a)$, the number $\mu_0 (V, a)$ measures how far $(V, a)$ is from being $\bQ$-factorial. Indeed, we have the \emph{defect} 
\begin{equation*}
    \sigma(V,a) \coloneqq \rk (\mathrm{Weil}(V,a) / \mathrm{Cart}(V,a))
\end{equation*}
of the germ $(V, a)$, where $\mathrm{Weil}(V, a)$ (resp.~$\mathrm{Cart}(V, a)$) is the Abelian group of Weil (resp.~Cartier) divisors of $(V, a)$. It is a finite number \cite[Lemma 1.12]{Kawa88}. If $\sigma (V, a) = 0$, the germ $(V, a)$ is called \emph{$\bQ$-factorial}. On the other hand, the defect $\sigma(V, a)$ equals $h^m_{\{ a \}}(V)$ (cf.~the proof of \cite[Proposition 3.10]{NS95}) and thus equals the $(m - 1)$th Betti number $\mu_0(V, a)$ of the link of $(V, a)$  as seen in the proof of Proposition \ref{differU}.
\end{remark}

We recall the definition of small morphisms in the sense of Goresky and MacPherson \cite{GM83}.

\begin{definition} \label{smallmor}
Let $\pi : W \to V$ be a proper morphism of varieties. For every integer $k \geqslant 1$ we define
$$
V^k \coloneqq \{ x \in V \mid \dim \pi^{- 1}(x) = k \}.
$$
We say that $\pi$ is \emph{small} if $\dim W - \dim V^k > 2 k$ for each $k$.
\end{definition}

\begin{remark} \label{mu0defect3D}
Suppose that $V$ is a $3$-dimensional projective variety with isolated rational singularities that admits a small resolution $\pi : W \to V$. Then the exceptional set of the small resolution $\pi$ consists of irreducible (rational) curves, and the defect $\sigma(V, a)$, as well as $\mu_0 (V, a)$, equals the number of the irreducible components of $\pi^{- 1}(a)$ (see \cite[Lemma 3.4]{Kawa88}). 
\end{remark}

The following proposition connects the topology of $W$ and $V$.

\begin{proposition} [{\cite[\S 6.2]{GM83}}] \label{smallinvchi} \label{invsma}
If a proper morphism $\pi : W \to V$ is a small resolution, then $IH^i(V) = IH^i (W) = H^i(W)$ for all $i$.
\end{proposition}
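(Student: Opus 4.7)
The plan is to establish the stronger statement $R\pi_* \IC^{\bullet}_W \cong \IC^{\bullet}_V$ in the derived category of constructible sheaves on $V$, from which the proposition follows by taking hypercohomology. Since $\pi$ is birational, $\dim W = \dim V =: m$, and because $W$ is smooth we have $\IC^{\bullet}_W = \bQ_W[2m]$, so $IH^i(W) = H^i(W, \bQ)$; the substantive content of the proposition is thus the identification $IH^i(V) \cong H^i(W, \bQ)$.

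I would verify that $\mathscr{F}^{\bullet} := R\pi_* \bQ_W[2m]$ satisfies the Deligne characterization of $\IC^{\bullet}_V$ for middle perversity (normalization, support, cosupport). First, any resolution restricts to an isomorphism over the smooth locus $V_{sm}$, yielding the normalization axiom $\mathscr{F}^{\bullet}|_{V_{sm}} = \bQ_{V_{sm}}[2m]$. Second, proper base change gives the stalk formula $\mathscr{H}^j(\mathscr{F}^{\bullet})_x = H^{j + 2m}(\pi^{-1}(x), \bQ)$, which automatically vanishes once $j + 2m > 2 \dim \pi^{-1}(x)$. For $x \in V^k$, smallness $\dim W - \dim V^k > 2k$ translates into the codimension of $V^k$ in $V$ being strictly greater than $2k$; combining this codimension bound with the stalk vanishing range yields precisely the stratum-wise vanishing required by the support axiom. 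Third, the cosupport axiom comes for free from Verdier duality: $R\pi_*$ intertwines $\mathbb{D}$ since $\pi$ is proper, $\bQ_W[2m]$ is Verdier self-dual on the smooth $W$, and $\IC^{\bullet}_V$ is self-dual for middle perversity, so the support check applied to the dual yields the cosupport condition.

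Once the axioms are verified, Deligne's uniqueness theorem gives $\mathscr{F}^{\bullet} \cong \IC^{\bullet}_V$, and taking hypercohomology yields
\[
    IH^i(V) = \mathscr{H}^{i - 2m}(V, \IC^{\bullet}_V) \cong \mathscr{H}^{i - 2m}(V, R\pi_* \bQ_W[2m]) \cong H^i(W, \bQ) = IH^i(W).
\]
The main obstacle I anticipate is the dimension bookkeeping that aligns the smallness inequality with the precise perversity inequality in the support axiom; this is, however, essentially why Goresky and MacPherson chose the strict inequality $\dim W - \dim V^k > 2k$ in Definition \ref{smallmor}, so once the axioms are recalled precisely the check is essentially forced. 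A secondary point is the existence of a Whitney stratification of $V$ whose strata are unions of locally closed pieces of the $V^k$, which is standard but deserves mention.
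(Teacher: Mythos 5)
Your proof is correct and follows the standard route — indeed the paper gives no argument of its own here, citing \cite[\S 6.2]{GM83}, where precisely this verification of the Deligne axioms (normalization, support via proper base change and the smallness inequality, cosupport via Verdier self-duality of $R\pi_*\bQ_W[2m]$) is how the result is established. One minor imprecision: smallness does not guarantee that $\pi$ is an isomorphism over all of the smooth locus of $V$, but this is harmless since the normalization axiom only requires $\bQ_U[2m]$ on a dense open $U \subseteq V$, which birationality of the resolution already provides.
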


In particular, the topological Euler characteristic $\chi(W)$ of the smooth variety $W$ equals the intersection homology Euler characteristic $\chi_{IH}(V)$ of $V$.


\subsection{Chern Classes}

We collect some basic material from Fulton's book \cite{fulton98} that we will need in Section \ref{appexamsec}.

\begin{lemma} \label{chernformula}
Let $\sE$ and $\sF$ be vector bundles with the same rank $r$ and $\sL$ a line bundle. Set $\xi = c_1 (\sL)$. Then for each integer $k \geqslant 1$
\begin{align}
    c_k(\sF \otimes \sL - \sE \otimes \sL) &= \sum_{i = 1}^k (- 1)^{k - i} \binom{k - 1}{i - 1} c_i(\sF - \sE) \xi^{k - i}, \label{ch1} \\
    c_r(\sE \otimes \sL) &= \sum_{i \geqslant 0} c_{i}(\sE)  \xi^{r - i}. \label{ch2}
\end{align}
\end{lemma}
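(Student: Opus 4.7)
The plan is to derive both identities at once from a single generating-function identity furnished by the splitting principle. Writing the Chern roots of $\sE$ as $a_1, \dots, a_r$ (so those of $\sE \otimes \sL$ are $a_i + \xi$) and using the formal Chern polynomial $c_t(\sE) := \sum_{i \geqslant 0} c_i(\sE) t^i = \prod_i (1 + a_i t)$, I compute
\[
c_t(\sE \otimes \sL) = \prod_i \bigl(1 + (a_i + \xi)t\bigr) = (1 + \xi t)^r \prod_i \!\left(1 + a_i \cdot \frac{t}{1 + \xi t}\right) = (1 + \xi t)^r\, c_s(\sE),
\]
where $s := t/(1 + \xi t)$. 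The same identity holds for $\sF$, and since $\rk \sE = \rk \sF = r$ the prefactor $(1 + \xi t)^r$ cancels when we pass to the ratio, yielding
\[
c_t(\sF \otimes \sL - \sE \otimes \sL) \;=\; c_s(\sF - \sE).
\]

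Formula \eqref{ch2} then follows immediately: expanding $(1 + \xi t)^r c_s(\sE) = \sum_{i \geqslant 0} c_i(\sE)\, t^i\, (1 + \xi t)^{r - i}$ and taking the coefficient of $t^r$, the contribution from each $i$ is $c_i(\sE) \cdot [t^{r - i}](1 + \xi t)^{r - i} = c_i(\sE) \, \xi^{r - i}$, which sums to the asserted right-hand side.

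For \eqref{ch1}, I expand
\[
c_s(\sF - \sE) \;=\; \sum_{i \geqslant 0} c_i(\sF - \sE)\, \frac{t^i}{(1 + \xi t)^i} \;=\; \sum_{i \geqslant 0} \sum_{m \geqslant 0} (-1)^m \binom{i + m - 1}{m}\, c_i(\sF - \sE)\, \xi^m\, t^{i + m}
\]
by the negative binomial series, and then collect the coefficient of $t^k$ by setting $m = k - i$. The identities $\binom{k - 1}{k - i} = \binom{k - 1}{i - 1}$ and the vanishing of the $i = 0$ term (because $\binom{k - 1}{k} = 0$ for $k \geqslant 1$) recover exactly the right-hand side of \eqref{ch1}.

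The substantive step is the splitting-principle identity $c_t(\cdot \otimes \sL) = (1 + \xi t)^r\, c_s(\cdot)$ together with the observation that the rank condition makes the prefactor cancel; after that, the only mildly delicate point is the bookkeeping for the generalized binomials $\binom{-i}{k - i} = (-1)^{k - i} \binom{k - 1}{i - 1}$, which I expect to be the principal (if minor) technical obstacle.
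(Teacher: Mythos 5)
Your proof is correct and follows essentially the same route as the paper: both rest on the identity $c_t(\sV \otimes \sL) = (1+\xi t)^r\, c_{t/(1+\xi t)}(\sV)$ (which the paper quotes from Fulton and you rederive via the splitting principle), the cancellation of the $(1+\xi t)^r$ factor from the equal-rank hypothesis, and the negative binomial expansion; the only difference is that the paper sets $t=1$ and sorts by cohomological degree while you extract coefficients of $t^k$, which is purely cosmetic.
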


\begin{proof}
By \cite[Example 3.2.2]{fulton98}, the Chern polynomial of $\sF \otimes \sL$ is $$
c_{t} (\sF \otimes \sL) = (1 + \xi t)^r c_{\tau(t)}(\sF)
$$
where $\tau(t) = t / (1 + \xi t)$, and similarly for $\sE \otimes \sL$. Then, by substituting one into $t$, we get
\begin{equation} \label{cherndifften}
    c(\sF \otimes \sL - \sE \otimes \sL)  = c_{\tau(1)} (\sF) / c_{\tau(1)} (\sE) = \sum_{i \geqslant 0} c_i(\sF - \sE) (1 + \xi)^{- i}.
\end{equation}
The equation \eqref{ch1} follows from the binomial expansion of $(1 + \xi)^{- i}$ and \eqref{cherndifften}. From the splitting principle, it is easy to get the equation \eqref{ch2} (see \cite[Remark 3.2.3 (b)]{fulton98}).
\end{proof}

Let $\sV$ be a vector bundle of rank $r$ on a smooth proper variety $M$. We say that a subvariety $V$ of $M$ is a \emph{complete intersection} with respect to the vector bundle $\sV$ if it is the zero scheme of a regular section of $\sV$, that is, $V$ has codimension $r$ in $M$ (see \cite[B.3.4]{fulton98}). Then the fundamental class of $V$ is \begin{equation} \label{fumdcl}
    [V] = c_{r} (\sV) \cap [M] 
\end{equation}
(see \cite[Example 3.2.16]{fulton98}) and the restriction $\sV|_V$ is the normal bundle of $V$ in $M$. The \emph{Fulton class} of $V$ is defined by
$$
c^F (V) \coloneqq c(T_M |_V - \sV |_V) \cap [V],
$$
where $T_M$ is the tangent bundle of $M$ (cf.~\cite[Example 4.2.6]{fulton98}). We will denote by $\chi (M | \sV)$ the degree of $0$-dimensional component of the Fulton class $c^F(V)$ and 
$$
\mu_{IH}(V, M) \coloneqq (- 1)^{\dim M} ( \chi_{IH} (V) - \chi(M | \sV)).
$$ 
We will write $\mu_{IH}(V)$ instead of $\mu_{IH}(V, M)$ when no confusion can arise.

\begin{remark}
If we suppose that $V$ is smooth, at least a rational homology manifold, then $\mu_{IH}(V)$ is equal (up to a sign) to the degree of the $0$-dimensional components of the (motivic) Milnor class of $V$ (see \cite{Yokura2010} and references therein).
\end{remark}


The following proposition is probably well known. For the reader’s convenience, we provide a proof of this proposition.

\begin{proposition} \label{interMuIH}
Let $m \geqslant 3$ and $V$ be a $m$-dimensional complete intersection with respect to $\sV$. If $V$ has only isolated singularities, then
\begin{equation} \label{ODPcri}
    \mu_{IH} (V) = (- 1)^{\rk \sV - 1} \sum_{a \in \Sing (V)} \left( \mu_0 (V, a) + \mu (V, a) \right),
\end{equation}
where $\mu(V, a)$ is the Milnor number and $\mu_0 (V, a)$ is the $(m - 1)$th Betti number of the link of the singularity $(V, a)$.
\end{proposition}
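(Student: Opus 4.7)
The plan is to assemble the claimed formula from two ingredients: Proposition \ref{differU}, which handles the contribution of $\mu_0$, and a standard smoothing/Milnor-fiber argument, which handles the contribution of the Milnor numbers $\mu$.

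First I would apply Proposition \ref{differU} directly to $V$: since $V$ is an $m$-dimensional proper variety with only isolated complete intersection singularities, it yields
\begin{equation*}
\chi_{IH}(V) = \chi(V) + (-1)^{m+1} \sum_{a \in \Sing V} \mu_0(V,a).
\end{equation*}

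Next I would compare $\chi(V)$ with $\chi(M\,|\,\sV)$. Since $\chi(M\,|\,\sV)$ is by construction the Euler characteristic of a smooth complete intersection $V'$ cut out by a regular section of $\sV$, one can view $V$ as a degeneration of $V'$ by considering a generic pencil of sections of $\sV$. Away from the singular points of $V$, this family is locally trivial, and near each isolated singularity $a$ the generic fibre restricts to the Milnor fibre $F_a$ of the isolated complete intersection singularity $(V,a)$. Using additivity of the Euler characteristic, one gets
\begin{equation*}
\chi(V') - \chi(V) = \sum_{a \in \Sing V} \bigl(\chi(F_a) - 1\bigr).
\end{equation*}
By Hamm's theorem the Milnor fibre $F_a$ is homotopy equivalent to a bouquet of $\mu(V,a)$ spheres of real dimension $m$, so $\chi(F_a) = 1 + (-1)^m \mu(V,a)$, and hence
\begin{equation*}
\chi(V) = \chi(M\,|\,\sV) + (-1)^{m-1} \sum_{a \in \Sing V} \mu(V,a).
\end{equation*}

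Combining the two displays, and using $(-1)^{m+1} = (-1)^{m-1}$, we obtain
\begin{equation*}
\chi_{IH}(V) - \chi(M\,|\,\sV) = (-1)^{m-1} \sum_{a \in \Sing V} \bigl(\mu(V,a) + \mu_0(V,a)\bigr).
\end{equation*}
Multiplying by $(-1)^{\dim M}$ with $\dim M = m + \rk \sV$, the sign becomes $(-1)^{m + \rk \sV + m - 1} = (-1)^{\rk \sV - 1}$, which produces exactly \eqref{ODPcri}.

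The main point requiring care is the smoothing step: one needs that a generic pencil through the section defining $V$ has a regular generic member and that the local family around each $a \in \Sing V$ really realises the Milnor fibration of the complete intersection singularity. I would justify this by choosing the pencil so that the discriminant meets $0$ only along $\Sing V$, then invoking the standard theory of Milnor fibres for isolated complete intersection singularities (as in Hamm) to identify the local fibres and to guarantee the bouquet-of-spheres description used above.
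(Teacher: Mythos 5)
Your overall skeleton (Proposition \ref{differU} for the $\mu_0$-terms, a comparison of $\chi(V)$ with $\chi(M|\sV)$ for the $\mu$-terms, then the sign bookkeeping $(-1)^{\dim M}(-1)^{m-1}=(-1)^{\rk\sV-1}$) is the same as the paper's, and that part is fine. The difference, and the problem, lies in how you compare $\chi(V)$ with $\chi(M|\sV)$. The paper does not smooth $V$ at all: it quotes the adjunction formula of Seade--Suwa \cite[Theorem 2.4]{SS98}, which is proved by localization of characteristic classes of the virtual tangent bundle and gives $\chi(V)=\chi(M|\sV)+(-1)^{m+1}\sum_a\mu(V,a)$ for any compact local complete intersection with isolated singularities, with no hypothesis on the global sections of $\sV$. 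Your argument instead asserts that ``$\chi(M|\sV)$ is by construction the Euler characteristic of a smooth complete intersection $V'$ cut out by a regular section of $\sV$'' and then degenerates $V'$ to $V$ through a generic pencil. But $\chi(M|\sV)$ is defined purely as a Chern number (the degree of the $0$-dimensional part of the Fulton class); it equals $\chi(V')$ only \emph{if} such a smooth $V'$ exists. The proposition assumes nothing like global generation or ampleness of $\sV$, so $H^0(M,\sV)$ may be too small to contain a pencil through the given section with smooth generic member, let alone one whose base locus avoids $\Sing(V)$ and whose discriminant is otherwise controlled. This is a genuine gap: the smoothing you rely on need not exist within the linear system, and without it the identification of the generic fibre, the local triviality away from $\Sing(V)$, and the appearance of the Milnor fibres $F_a$ all lose their footing.

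If you add the hypothesis that $V$ admits such a smoothing (e.g.\ $\sV$ globally generated, base locus of the pencil disjoint from $\Sing(V)$, generic member smooth), then your degeneration argument, together with Hamm's bouquet theorem and additivity of $\chi$, does recover the same identity, and the remaining steps and signs in your write-up are correct. To prove the proposition in the stated generality, however, you should replace the smoothing step by the Seade--Suwa adjunction formula (or reprove it via GSV-index/Chern-class localization), which is exactly the point of the paper's one-line proof.
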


\begin{proof}
Note that $m = \dim M - \rk \sV$. By \cite[Theorem 2.4]{SS98}, we have
$$
\chi (V) = \chi(M | \sV) + (- 1)^{m + 1} \sum_{a \in \Sing (V)} \mu (V, a).
$$
The formula \eqref{ODPcri} follows from Proposition \ref{differU} and the definition of $\mu_{IH}(V)$.
\end{proof}


\begin{remark}
For a hypersurface $V$ with (possibly nonisolated) singularities, there is a formula for $\mu_{I H}(V)$ in terms of a stratification of the singular locus (cf.~\cite[Corollary 4.4]{Laurentiu11}).
\end{remark}


\section{Determinantal Contractions} \label{detersec}








Let $\sigma : \sE \rightarrow \sF$ be a morphism of vector bundles of ranks $e$ and $f$ on a smooth proper variety $M$. Note that there is a natural bijection between morphisms $\sE \rightarrow \sF$ and global sections of $\sE^{\vee} \otimes \sF$.

For $i \leqslant \mathrm{min}(e, f)$, we define the $i$th degeneracy locus of $\sigma$ by
$$
D_i(\sigma) = \{ x \in M \mid \rk(\sigma (x)) \leqslant i \}
$$
with the convention $D_{- 1 }(\sigma) = \varnothing$. Its ideal is locally generated by $(i + 1)$-minors of a matrix for $\sigma$. Notice that the $0$th degeneracy locus of $\sigma$ is the zero scheme $Z(\sigma)$. The codimension of $D_i(\sigma)$ in $M$ is less than or equal to $(e - i)(f - i)$ (see Remark \ref{generalPP} below or \cite[Theorem 14.4 (b)]{fulton98}), which is called its \emph{expected codimension}.

\begin{definition}[\cite{PP95a}] \label{general}
For a given integer $r \geqslant 0$, we say that $\sigma$ is $r$-general if for every $i = 0,1, \cdots, r$, the subset $D_i(\sigma) \setminus D_{i - 1}(\sigma)$ is smooth of (expected) codimension $(e - i)(f - i)$ in the smooth variety $M$.
\end{definition}



Before proceeding further, let us remark on the generality of morphisms. 

\begin{remark}\label{generalPP}
Parusi\'nski and Pragacz \cite{PP95a} defined the notion of $r$-general morphisms for an arbitrary variety $M$ with a fixed Whitney stratification $\mathcal{S}$. Indeed, consider the vector bundle $\uHom (\sE, \sF)$ and the universal degeneracy loci $\mathbb{D}_i$ whose fiber over $x \in M$ consists of all homomorphisms $\sE(x) \to \sF(x)$ of rank $\leqslant i$. The codimension of $\bD_i$ in $\uHom (\sE, \sF)$ is $(e - i)(f - i)$ and each $\bD_i \setminus \bD_{i - 1}$ is smooth. If $\sigma : \sE \to \sF$ is viewed as 
$$
s_{\sigma}: M \to \uHom (\sE, \sF)
$$
a section of the vector bundle $\uHom (\sE, \sF)$, then the degeneracy locus $D_i(\sigma)$ is the pullback $s_{\sigma}^{- 1}(\bD_i)$, which is isomorphic to the intersection of $s_{\sigma}(M)$ with $\bD_i$, and thus it has codimension at most $(e - i) (f - i)$ in $M$. Then $\sigma$ is $r$-\emph{general} in the sense of \cite{PP95a} if the section $s_{\sigma}$ induced by $\sigma$ intersects, on each stratum of $\mathcal{S}$, the subset $\bD_i \setminus \bD_{i - 1}$ transversely for $i = 0, 1, \cdots, r$. It coincides with Definition \ref{general} if $M$ is smooth \cite[Lemma 2.9]{PP95a}.
\end{remark}

\begin{remark} \label{singdeg}
According to Definition \ref{general}, it follows that for $i = 0, \cdots, r$, the singular locus of $D_i(\sigma)$ is contained in $D_{i - 1}(\sigma)$ if $\sigma$ is $r$-general. Moreover, the inclusion is an equality. Indeed, since the statement is local, we pick a connection on the vector bundle $\uHom (\sE, \sF)$. Then
$$
\nabla (\wedge^{i + 1} s_{\sigma}) = (i + 1) (\nabla s_{\sigma}) \wedge (\wedge^i s_{\sigma}),  
$$
by the Leibniz rule. Therefore the derivative of $\wedge^{i + 1} s_{\sigma}$ vanishes along $D_i(\sigma)$, that is, $\Sing(D_i(\sigma)) \supseteq D_{i - 1}(\sigma)$. 

If the $(r - 1)$th degeneracy locus has codimension exactly $\dim M$, then the number of points of it is the degree of $[D_{n - 1}(\sigma)]$, because $s_{\sigma}(M)$ and $\bD_{r - 1}$ intersect transversely. 
\end{remark}

From now on we assume that $\sE$ and $\sF$ have the same rank $n + 1$. 

\begin{definition}
For a morphism $\sigma : \sE \to \sF$, we call the maximal degeneracy locus $D_n(\sigma)$ is a \emph{determinantal hypersurface} if it has the expected codimension (which equals one).
\end{definition}


Note the $D_n(\sigma)$ is the zero scheme of the section $\det \sigma$ of the line bundle $\det \sE^{\vee} \otimes \det \sF$.

\begin{example} \label{examp}
The prototypical setting is where $M = \bP^d$ and the vector bundles are sums of line bundles. A morphism of rank $n + 1$ vector bundles
$$
\sigma : \bigoplus \sO_{\bP^d}( - a_i) \to \bigoplus \sO_{\bP^d}(b_j)
$$
is represented as a $(n + 1) \times (n + 1)$ matrix $[\sigma_{i j}(z)]$ of homogeneous polynomials such that $a_i + b_j$ is the degree of $\sigma_{ij}(z)$ for $1 \leqslant i, j \leqslant n + 1$. Then $n$th degeneracy locus $D_n(\sigma)$ is defined by the determinant of the matrix $[\sigma_{i j} (z)]$.
\end{example}

To get small resolutions of maximal degeneracy loci, we consider the following geometric construction (cf.~\cite[Example 14.4.10]{fulton98}). Let $\bP (\sF)$ be the projective bundle with the structure morphism $p$. Fix a morphism $\sigma$ and let $Z_n(\sigma)$ denote the zero scheme in $\bP (\sF)$ of the global section of $p^{\ast} \sE^{\vee} \otimes \sO_{\bP(\sF)} (1)$ induced by the composition of $p^{\ast} \sigma$ with $p^{\ast} \sF \twoheadrightarrow \sO_{\bP(\sF)} (1)$. 



We can interpret $Z_n(\sigma)$ as the projectivization of the cokernel sheaf $\sC$ of $\sigma$. Indeed, thinking of $\bP(\sF)$ as the bundle of $1$-dimensional quotient of $\sF$
$$
\bP(\sF) = \{ (x, \lambda) \mid \lambda : \sF(x) \twoheadrightarrow \bC \},
$$
the zero scheme $Z_n(\sigma)$ induced by 
$p^{\ast} \sE \rightarrow p^{\ast} \sF \twoheadrightarrow \sO_{\bP(\sF)} (1)$ is exactly
$$
\{(x, \lambda) \mid \lambda \circ \sigma (x) : \sE (x) \to \bC \text{ is zero} \},
$$
which is nothing but the subscheme $\bP(\sC)$ of $\bP(\sF)$. So $Z_n(\sigma)$ maps \emph{onto} $D_n(\sigma)$ by $p$, because the linear map $\sigma :\sE(x) \to \sF(x)$ is not surjective if and only if there is a nonzero functional $\lambda : \sF(x) \twoheadrightarrow \bC$ with $\lambda \circ \sigma(x)$ being the zero map.



Let $d = \dim M \geqslant 4$. Denoting by $\pi$ the restriction of $p$ to $Z_n(\sigma)$, we have the following commutative diagram:

\begin{equation} \label{constrsmallreso}
\xymatrix{
Z_n(\sigma) \ar@{^{(}->}[r]^{\jmath} \ar[d]_{\pi} & \bP(\sF)  \ar[d]^p \\
D_n(\sigma) \ar@{^{(}->}[r]^{\iota} & M \mathrlap{\, .}}
\end{equation}
The morphism $\pi$ is called the \emph{determinantal contraction} of $Z_n (\sigma)$.




The following proposition is a reformulation of \cite[Theorem 2.12]{PP95a} for maximal degeneracy loci.

\begin{proposition} [{\cite{PP95a}}] \label{chernZ}
With the above notation, if $\sigma$ is $n$-general then $\pi$ is a small resolution and 
\begin{equation} \label{chiIHD}
\chi_{IH} (D_n(\sigma)) = \chi (Z_n(\sigma)).    
\end{equation}
Moreover, the pushforward of the total Chern class of $Z_n(\sigma)$ is 
\begin{equation} \label{pushfwardch}
    p_{\ast} \jmath_{\ast} c(Z_n (\sigma)) =  \sum_{0 \leqslant i + j \leqslant d - 1} (-1)^{i + j} \binom{i + j}{i} s_{1 + i, (1)^j} (\sF - \sE) \cap c(M).
\end{equation}
\end{proposition}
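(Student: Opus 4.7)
The plan splits naturally into two parts: establishing that $\pi$ is a small resolution (which gives \eqref{chiIHD} instantly by Proposition~\ref{smallinvchi}), and then deriving the Chern class pushforward formula~\eqref{pushfwardch}.

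For the small resolution claim, the key observation is the identification $Z_n(\sigma) \cong \bP(\sC)$ with $\sC = \operatorname{coker}(\sigma)$, which was discussed just before the statement. Over the stratum $D_i(\sigma) \setminus D_{i-1}(\sigma)$ the cokernel has constant rank $n+1-i$, so $\pi$ has fibers $\bP^{n-i}$ there. By the $n$-generality of $\sigma$ (Definition~\ref{general}), each such stratum is smooth of codimension $(n+1-i)^2$ in $M$; the same transversality built into $n$-generality (as formalized in Remark~\ref{generalPP}) shows that $Z_n(\sigma)$ is itself smooth of dimension $d - 1$. To verify smallness as in Definition~\ref{smallmor}, I would note that the locus $V^k \subset D_n(\sigma)$ where $\pi$ has $k$-dimensional fibers is $D_{n-k}(\sigma) \setminus D_{n-k-1}(\sigma)$, whose dimension is $d - (k+1)^2$. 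A direct substitution gives
\[
\dim Z_n(\sigma) - \dim V^k = (d-1) - (d - (k+1)^2) = (k+1)^2 - 1 > 2k \qquad \text{for all } k \geqslant 1,
\]
which is exactly the smallness inequality. The Euler characteristic identity \eqref{chiIHD} is then immediate from Proposition~\ref{smallinvchi}.

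For the Chern class formula, I would exploit the fact that $Z_n(\sigma)$ is cut out in $\bP(\sF)$ by a regular section of $p^*\sE^\vee \otimes \sO_{\bP(\sF)}(1)$, so by the conormal/tangent sequence and the projection formula,
\[
\jmath_* c(Z_n(\sigma)) = \frac{c(T_{\bP(\sF)}) \cdot c_{n+1}(p^*\sE^\vee \otimes \sO_{\bP(\sF)}(1))}{c(p^*\sE^\vee \otimes \sO_{\bP(\sF)}(1))} \cap [\bP(\sF)].
\]
The relative tangent sequence on $\bP(\sF)$ provides $c(T_{\bP(\sF)/M}) = c(p^*\sF^\vee \otimes \sO_{\bP(\sF)}(1))$, so with $\xi = c_1(\sO_{\bP(\sF)}(1))$ the integrand becomes a polynomial in $\xi$ whose coefficients are pulled back from $M$. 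Pushing forward by $p$ then reduces to computing $p_*(\xi^{n+a})$ via the defining relation $\xi^{n+1} + \sum_{a \geqslant 1} p^*c_a(\sF)\,\xi^{n+1-a} = 0$, which produces Segre-type classes of $\sF$.

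The main obstacle is the combinatorial step of recasting the resulting polynomial expression as the hook-indexed Schur sum on the right-hand side of \eqref{pushfwardch}. I would expand the twisted Chern classes $c(\sF^\vee \otimes \sO(1))$ and $c(\sE^\vee \otimes \sO(1))$ using Lemma~\ref{chernformula}, then recognize the coefficients of monomials in $\xi$ via the Jacobi--Trudi determinantal expression for Schur polynomials $s_{(1+i,(1)^j)}(\sF - \sE)$ of hook shape. The binomial $\binom{i+j}{i}$ appears naturally as $f^{(1+i,(1)^j)}$ by Proposition~\ref{hookleng}, reflecting the hook shape of $\lambda = (1+i, (1)^j)$. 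Since the formula is proved in greater generality in \cite{PP95a}, the efficient route is to specialize their Theorem~2.12 to the case of maximal degeneracy loci and verify that the output matches \eqref{pushfwardch}.
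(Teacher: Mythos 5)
Your proposal is correct and follows essentially the same route as the paper: identifying $Z_n(\sigma)$ with $\bP(\mathrm{Coker}\,\sigma)$ to read off the fiber dimensions, checking the smallness inequality from the codimensions $(n+1-i)^2$ supplied by $n$-generality (your explicit computation $(k+1)^2-1>2k$ is exactly the ``straightforward'' step the paper leaves implicit), getting smoothness of $Z_n(\sigma)$ from the transversality underlying $n$-generality as in \cite{PP95a}, and deducing \eqref{chiIHD} from Proposition \ref{smallinvchi}. For the pushforward formula \eqref{pushfwardch} you sketch a direct computation but, like the paper, ultimately rely on citing the corresponding result of Parusi\'nski--Pragacz, so the two arguments coincide in substance.
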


\begin{proof}
Since $Z_n(\sigma)$ is the projectivization of $\sC = \mathrm{Coker} (\sigma)$, the fiber of $\pi$ over $x \in D_n(\sigma)$ is just the projective space $\bP(\sC (x))$. On the other hand, the projective space $\bP(\sC(x))$ has dimension $\geqslant k$ if and only if $x \in D_{n - k}(\sigma)$. Therefore, if $x \in D_n(\sigma) \setminus D_{n - 1}(\sigma)$ then the fiber $\pi^{- 1}(x)$ is a point. In other word, $\pi$ is birational. 

By the assumption that $\sigma$ is $n$-general, the dimension  of $D_k(\sigma) \setminus D_{k - 1}(\sigma)$ is $d - (n + 1 -k)^2$. It is straightforward to show that the morphism $\pi$ is small (see Definition \ref{smallmor}). Thanks to \cite[Lemma 2.9]{PP95a}, it follows that $Z_n(\sigma)$ is smooth (for details see \cite[pp. 810-811]{PP95a}). Hence the morphism $\pi$ is a small resolution and \eqref{chiIHD} follows from Propositon \ref{smallinvchi}. The formula
\eqref{pushfwardch} of the total Chern class $p_{\ast} \jmath_{\ast} (c(T_{Z_n(\sigma)}) \cap [Z_n(\sigma)])$ is a special case of \cite[Proposition 2.5]{PP95a}.
\end{proof}

\begin{remark}
It is known that the pushforward of the Chern class of a small (or crepant) resolution
is independant of the resolution. The class obtained is called the \emph{stringy Chern class} and the integration of such class is called the \emph{stringy Euler number} \cite{Batyrev97, dLNU07}. In the situation \eqref{constrsmallreso}, the stringy Chern class of $D_n (\sigma)$ and $\pi_{\ast} (c(T_{Z_n(\sigma)}) \cap [Z_n(\sigma)])$ coincide \cite[Proposition 4.5]{dLNU07}.
\end{remark}




\section{Main result} \label{appexamsec}

Keeping the notation introduced in Section \ref{detersec}, we can now prove the formula for the intersection homology Euler characteristic of general determinantal hypersurfaces.

\begin{theorem} \label{mainresultpf}
If the morphism $\sigma : \sE \to \sF$ is $n$-general, then
$$
\mu_{IH}(D_n(\sigma), M) = \sum_{\lambda \supseteq (2,2)} (- 1)^{d + |\lambda|} f^{\lambda} \int_M s_{\lambda} (\sF - \sE) \, c_{d - | \lambda |} (T_M),
$$
where $f^{\lambda}$ denotes the number of standard Young tableaux of shape $\lambda$.
\end{theorem}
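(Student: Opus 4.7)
The plan is to compare two expansions of $\chi_{IH}(D_n(\sigma))$ and $\chi(M\mid\sL)$ in the Schur basis $\{s_\lambda(\sF-\sE)\}$, and see that only the partitions $\lambda\supseteq(2,2)$ survive the subtraction.

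First, since $\sigma$ is $n$-general, Proposition \ref{chernZ} gives a small resolution $\pi:Z_n(\sigma)\to D_n(\sigma)$, whence $\chi_{IH}(D_n(\sigma))=\chi(Z_n(\sigma))$, and the latter is the degree of $c(T_{Z_n(\sigma)})\cap[Z_n(\sigma)]$, which we compute via the pushforward formula \eqref{pushfwardch}. Integrating over $M$ and keeping only the component of total degree $d$, I get
$$
\chi(Z_n(\sigma))=\sum_{\substack{i,j\geqslant 0\\ i+j\leqslant d-1}}(-1)^{i+j}\binom{i+j}{i}\int_M s_{1+i,(1)^j}(\sF-\sE)\,c_{d-1-i-j}(T_M).
$$
The crucial observation is that the hook $\lambda=(1+i,(1)^j)$ has $|\lambda|=1+i+j$ and, by the hook length formula (Proposition \ref{hookleng}), $f^{\lambda}=\binom{i+j}{i}$. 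Setting $\lambda=(1+i,(1)^j)$ therefore rewrites the above as a sum over \emph{hook} partitions of size $\leqslant d$:
$$
\chi(Z_n(\sigma))=\sum_{\lambda\text{ hook},\,1\leqslant|\lambda|\leqslant d}(-1)^{|\lambda|+1}f^{\lambda}\int_M s_{\lambda}(\sF-\sE)\,c_{d-|\lambda|}(T_M).
$$

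Next, set $\xi:=c_1(\sL)=c_1(\sF)-c_1(\sE)=s_{(1)}(\sF-\sE)$. Expanding the geometric series in the defining formula for $\chi(M\mid\sL)$ yields
$$
\chi(M\mid\sL)=\sum_{k=1}^{d}(-1)^{k-1}\int_M\xi^{k}\,c_{d-k}(T_M).
$$
Applying Lemma \ref{simpPieri} to replace each $\xi^{k}=s_{(1)}^{k}$ by $\sum_{|\lambda|=k}f^{\lambda}s_{\lambda}(\sF-\sE)$ converts this into a sum over \emph{all} partitions of size between $1$ and $d$:
$$
\chi(M\mid\sL)=\sum_{1\leqslant|\lambda|\leqslant d}(-1)^{|\lambda|+1}f^{\lambda}\int_M s_{\lambda}(\sF-\sE)\,c_{d-|\lambda|}(T_M).
$$

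Finally, subtracting the two expressions, the contributions from hook partitions cancel identically, and only the non-hook partitions remain, with an overall sign flip:
$$
\chi_{IH}(D_n(\sigma))-\chi(M\mid\sL)=\sum_{\lambda\text{ non-hook}}(-1)^{|\lambda|}f^{\lambda}\int_M s_{\lambda}(\sF-\sE)\,c_{d-|\lambda|}(T_M).
$$
Multiplying by $(-1)^d$ and using that $\lambda$ is a non-hook precisely when $\lambda_2\geqslant 2$, i.e.\ $\lambda\supseteq(2,2)$, gives the stated formula. The whole argument is essentially bookkeeping; the only point requiring real care is matching the binomial coefficients coming from \eqref{pushfwardch} with the hook-length values of $f^{\lambda}$, and tracking the signs $(-1)^{|\lambda|+1}$ consistently across the two expansions so that the hook terms cancel cleanly.
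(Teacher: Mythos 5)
Your proposal is correct and follows essentially the same route as the paper: both identify $\chi_{IH}(D_n(\sigma))$ with $\chi(Z_n(\sigma))$ via Proposition \ref{chernZ}, expand the pushforward formula \eqref{pushfwardch} using the hook length formula $f^{(1+i,(1)^j)}=\binom{i+j}{i}$, expand the Fulton-class term (equivalently $\chi(M\mid\sL)$) via Lemma \ref{simpPieri}, and cancel the hook contributions so that only $\lambda\supseteq(2,2)$ survives. The only cosmetic difference is that you integrate to Euler characteristics at the outset, whereas the paper subtracts the Chern/Fulton classes in the Chow group and takes $0$-dimensional components at the end.
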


\begin{proof}
Let $\sL = \det \sE^{\vee} \otimes \det \sF$. We abbreviate $D_n(\sigma)$ and $Z_n(\sigma)$ to $D$ and $Z$ respectively. Recall that the Fulton class of $D$ is
$$
c^F(D) = c(T_M|_{D} - \sL|_{D}) \cap [D].
$$  
According to Proposition \ref{chernZ} and the commutative diagram \eqref{constrsmallreso}, it follows that
$$
\mu_{IH}(D) = (- 1)^d \int_M ( p_{\ast} \jmath_{\ast} c(Z) - \iota_{\ast} c^F(D)).
$$
Our goal is to find the difference of classes $p_{\ast} \jmath_{\ast} c(Z) - \iota_{\ast} c^F(D)$. First, we observe that $\iota_{\ast} [D] = c_1(\sL) \cap [M]$ in $A_{d - 1}(M)$,
$$
c_1(\sL) = c_1(\sF) - c_1(\sE) = s_{(1)} (\sF - \sE)
$$
and the inverse of total Chern class of $\sL$ is $(1 + c_1(\sL))^{-1}$. By the projection formula,
\begin{align*}
    \iota_{\ast} c^F(D) &= \iota_{\ast} (\iota^{\ast} c(T_M - \sL) \cap [D]) \\
    &= c(T_M - \sL) \cap (c_1(\sL) \cap [M]) \\
    &= (1 + c_1(\sL))^{-1} c_1(\sL) \cap (c(T_M) \cap [M]) \\
    &= \sum_{\ell = 0}^{\infty} (- 1)^{\ell} s_{(1)} (\sF - \sE)^{\ell + 1} \cap c(M).
\end{align*}
Then, by Proposition \ref{chernZ} and the hook length formula (see Proposition \ref{hookleng}),
$$
p_{\ast} \jmath_{\ast} c(Z) =  \sum_{0 \leqslant i + j \leqslant d - 1} (-1)^{i + j} f^{1 + i, (1)^j} s_{1 + i, (1)^j} (\sF - \sE) \cap c(M).
$$
Denote $s_{\lambda}(\sF - \sE)$ briefly by $s_{\lambda}$. According to Lemma \ref{simpPieri}, it follows that for $\ell \geqslant 0$
\begin{align} \label{maincompute}
& \sum_{i + j = \ell } (-1)^{i + j} f^{1 + i, (1)^j} s_{1 + i, (1)^j}^{} - (- 1)^{\ell} s_{(1)}^{\ell + 1} \notag \\
=& (- 1)^{\ell + 1} \left( - \sum_{i + j = \ell } f^{1 + i, (1)^j} s_{1 + i, (1)^j} + \sum_{|\lambda| = \ell + 1} f^{\lambda} s_{\lambda} \right)  \\
=& (- 1)^{\ell + 1} \sum\nolimits_{\lambda \supseteq (2, 2), \, |\lambda| = \ell + 1} f^{\lambda} s_{\lambda} \notag.
\end{align}
The desired formula follows now by \eqref{maincompute} and taking $0$-dimensional components of the difference of classes. This completes the proof.





\end{proof}

The following is an immediate consequence of Theorem \ref{mainresultpf}.


\begin{corollary} \label{maincoro}
Assume that the smooth proper variety $M$ satisfies one of the following assumptions 
\begin{inparaenum}
    \item $\dim M = 4$, or
    \item $\dim M = 5$ and 
    \begin{equation} \label{CYcondi}
        c_1(T_M) = c_1(\sF - \sE).
    \end{equation}
\end{inparaenum}
If $\sigma$ is $n$-general, then 
$$
\mu_{IH}(D_n(\sigma), M)  = (\dim M - 2) \int_{M} c(T_M) \cap [D_{n - 1} (\sigma)].
$$
\end{corollary}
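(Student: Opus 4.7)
The plan is to specialize Theorem \ref{mainresultpf} and carry out the finite enumeration of partitions $\lambda \supseteq (2,2)$ with $|\lambda| \leqslant d$. Since $\sigma$ is $n$-general, the singular locus $D_{n-1}(\sigma)$ has the expected codimension $(n+1-(n-1))^2 = 4$ (see Remark \ref{singdeg}), so its fundamental class is given by the Thom--Porteous formula
$$[D_{n-1}(\sigma)] = s_{(2,2)}(\sF - \sE) \cap [M]$$
in the conventions of Notation \ref{notationSeg}. This identity is the bridge that rewrites the Schur-polynomial expression from Theorem \ref{mainresultpf} in terms of $[D_{n-1}(\sigma)]$.

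For case \ref{3D}, when $d = 4$, the only Young diagram containing $(2,2)$ with $|\lambda| \leqslant 4$ is $(2,2)$ itself. Proposition \ref{hookleng} gives $f^{(2,2)} = 2$, and Theorem \ref{mainresultpf} collapses to
$$\mu_{IH}(D_n(\sigma), M) = 2 \int_M s_{(2,2)}(\sF - \sE).$$
Since $[D_{n-1}(\sigma)]$ is zero-dimensional, only the $c_0(T_M) = 1$ piece of $c(T_M) \cap [D_{n-1}(\sigma)]$ contributes under integration, yielding the claimed identity with factor $\dim M - 2 = 2$.

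For case \ref{4D}, when $d = 5$, the relevant partitions are $(2,2)$, $(3,2)$, and $(2,2,1)$. The hook length formula gives $f^{(2,2)} = 2$ and $f^{(3,2)} = f^{(2,2,1)} = 5$, so Theorem \ref{mainresultpf} reduces to
$$\mu_{IH}(D_n(\sigma), M) = -2 \int_M s_{(2,2)}(\sF - \sE)\, c_1(T_M) + 5 \int_M \bigl(s_{(3,2)}(\sF - \sE) + s_{(2,2,1)}(\sF - \sE)\bigr).$$
Pieri's formula \eqref{Pieri} yields $s_{(3,2)} + s_{(2,2,1)} = s_{(1)} \cdot s_{(2,2)}$, and the Calabi--Yau condition \eqref{CYcondi} lets me replace $s_{(1)}(\sF - \sE)$ by $c_1(T_M)$. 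The three terms collapse to $3 \int_M s_{(2,2)}(\sF - \sE)\, c_1(T_M)$. On the other hand, $[D_{n-1}(\sigma)]$ has dimension $1$, so only the $c_1(T_M)$ component of $c(T_M) \cap [D_{n-1}(\sigma)]$ contributes, matching the claim with factor $\dim M - 2 = 3$.

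The only step that is not purely combinatorial is the appeal to the Thom--Porteous formula with the sign and duality conventions of Notation \ref{notationSeg}; everything else is a routine enumeration and a single application of Pieri's rule together with the Calabi--Yau hypothesis.
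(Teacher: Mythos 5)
Your proposal is correct and follows essentially the same route as the paper: specialize Theorem \ref{mainresultpf} to the finitely many partitions $\lambda \supseteq (2,2)$ with $|\lambda| \leqslant d$, compute $f^{(2,2)} = 2$, $f^{(3,2)} = f^{(2,2,1)} = 5$ via the hook length formula, combine the degree-$5$ terms with Pieri's rule and the Calabi--Yau condition, and translate $s_{(2,2)}(\sF-\sE)$ into $[D_{n-1}(\sigma)]$ by the Giambelli--Thom--Porteous formula \eqref{GTPformula}. The signs, coefficients, and dimension bookkeeping all check out, so nothing further is needed.
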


\begin{proof}
For simplicity, we let $s_{\lambda}$ stand for $s_{\lambda} (\sF - \sE)$. For the case $\dim M = 5$, we first observe that the first Chern class of $T_M$ is $s_{(1)}$ by \eqref{CYcondi} (see Example \ref{chernsym}). From Proposition \ref{hookleng}, $f^{(2, 2)} = 2$ and $f^{(3. 2)} = f^{(2, 2, 1)} = 5$. 
Theorem \ref{mainresultpf} now yields  
\begin{align*}
    \mu_{IH}(D_n(\sigma)) &= f^{(3, 2)} \int_M s_{(3, 2)} + f^{(2, 2, 1)} \int_M s_{(2, 2, 1)} - f^{(2, 2)} \int_M s_{(2, 2)} \, c_1(T_M) \\
    &= 5 \int_M s_{(2, 2)} s_{(1)} - 2 \int_M s_{(2, 2)} \, c_1(T_M) = 3 \int_M s_{(2, 2)} \, c_1(T_M).
\end{align*}
Here the second equality follows from Pieri's formula \eqref{Pieri}. Then the corollary follows from the Giambelli-Thom-Porteous formula \cite[Theorem 14.4]{fulton98}, 
\begin{equation} \label{GTPformula}
    [D_{n - 1}(\sigma)] = s_{(2, 2)}(\sF - \sE) \cap [M].
\end{equation}
Similarly, for the case $\dim M = 4$, twice the degree of $[D_{n - 1}(\sigma)]$ is equal to $\mu_{IH}(D_n(\sigma))$.
\end{proof}

\begin{remark}
Corollary \ref{maincoro} also can be computed by standard tools in \cite{fulton98}, e.g., \eqref{jzfundcl} and \eqref{chernTZ} below (cf.~the proof of Proposition \ref{interprod}).
\end{remark}












We are now ready to prove the main result. Recall that if $\sigma : \sE \to \sF$ is $n$-general, then $\pi : Z_n(\sigma) \to D_n(\sigma)$ is a small resolution by Proposition \ref{chernZ}.

\begin{theorem} \label{ODPcoro}
Suppose that the smooth proper variety $M$ has dimension 4. Given a $n$-general morphism $\sigma $, we assume that $Z_n(\sigma)$ is connected. Then the determinantal hypersurface $D_n(\sigma)$ is nodal.
\end{theorem}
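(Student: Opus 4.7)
The plan is to produce a single numerical identity on the Milnor numbers at the singular points of $D_n(\sigma)$ by comparing the global formula from Corollary \ref{maincoro} with the local expression of Proposition \ref{interMuIH}. Because an isolated hypersurface singularity with Milnor number $\mu = 1$ is automatically an ordinary double point (Morse lemma), this will suffice.

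I would begin with a dimension count. Since $\sigma$ is $n$-general and $\dim M = 4$, the stratum $D_{n-2}(\sigma)$ would have codimension $9 > 4$ in $M$ and so is empty, while $D_{n-1}(\sigma)\setminus D_{n-2}(\sigma)$ has codimension $4$. By Remark \ref{singdeg}, the singular locus $\Sing(D_n(\sigma)) = D_{n-1}(\sigma)$ thus consists of finitely many reduced points; call their number $N = \deg [D_{n-1}(\sigma)]$. Over any such $a$, the cokernel $\sC(a)$ of $\sigma(a)$ has dimension $2$, so the fibre of the small resolution $\pi : Z_n(\sigma) \to D_n(\sigma)$ given by Proposition \ref{chernZ} is $\pi^{-1}(a) = \bP(\sC(a)) \cong \bP^1$. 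Stratifying $D_n(\sigma)$ into its smooth locus (on which $\pi$ is an isomorphism) and the $N$ singular points yields at once
$$
\chi(Z_n(\sigma)) \;=\; \chi(D_n(\sigma)) + N.
$$

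Next I would perform two complementary counts. By Proposition \ref{smallinvchi}, $\chi_{IH}(D_n(\sigma)) = \chi(Z_n(\sigma))$, and by Proposition \ref{differU} (with $m = 3$, so $(-1)^{m+1} = 1$), $\chi_{IH}(D_n(\sigma)) - \chi(D_n(\sigma)) = \sum_a \mu_0(D_n(\sigma), a)$. Combining these with the display above gives $\sum_a \mu_0 = N$. Independently, Corollary \ref{maincoro} in the case $\dim M = 4$ gives $\mu_{IH}(D_n(\sigma), M) = 2\int_M c(T_M)\cap [D_{n-1}(\sigma)] = 2N$, while Proposition \ref{interMuIH} applied to the section $\det\sigma$ of the line bundle $\sL = \det\sE^{\vee}\otimes\det\sF$ (so that $\rk \sL = 1$ and the overall sign is $+1$) gives $\mu_{IH}(D_n(\sigma), M) = \sum_a(\mu_0 + \mu)(D_n(\sigma), a)$. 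Subtracting $\sum_a\mu_0 = N$ from $\sum_a(\mu_0 + \mu) = 2N$ forces
$$
\sum_{a \in \Sing(D_n(\sigma))} \mu(D_n(\sigma), a) \;=\; N.
$$

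Since $\mu(V, a) \geqslant 1$ at any isolated hypersurface singularity, an $N$-term sum equal to $N$ can only occur if $\mu = 1$ at every singular point, whence each singularity is an ordinary double point. The step I would have expected to be delicate is ruling out that some $\mu_0$ vanishes (equivalently, that some singularity is $\bQ$-factorial and so forces $\mu \geqslant 2$ to compensate); the double count sidesteps it, because the small-resolution data $\chi(\pi^{-1}(a)) = 2$ already pins $\sum_a \mu_0$ to $N$ on the nose, and the Milnor-number sum is then determined by subtraction rather than by a local argument.
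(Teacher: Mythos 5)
Your argument is correct, and its skeleton is the paper's: compare the global value $\mu_{IH}(D_n(\sigma))=2N$ from Corollary \ref{maincoro} with the local expression $\mu_{IH}=\sum_a(\mu_0+\mu)$ from Proposition \ref{interMuIH} (sign $+1$ since $\rk\sL=1$), and force $\mu=1$ at each of the $N$ points of $D_{n-1}(\sigma)$. The genuine difference is how the $\mu_0$-terms are handled. The paper keeps the sum $\sum_a(\mu_0+\mu)=2N$ intact and invokes Remark \ref{mu0defect3D} to get the pointwise bound $\mu_0(D_n,a)\geqslant 1$ (identifying $\mu_0$ with the number of components of the small-resolution fibre, a statement about rational singularities), after which both $\mu_0$ and $\mu$ must equal $1$ at every point. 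You instead pin down $\sum_a\mu_0$ exactly: from $\pi^{-1}(a)\cong\bP^1$ over each of the $N$ points (legitimate, since $D_{n-2}(\sigma)=\varnothing$ by the codimension-$9$ count) you get $\chi(Z_n(\sigma))=\chi(D_n(\sigma))+N$, and combining Proposition \ref{smallinvchi} with Proposition \ref{differU} (with $m=3$, sign $+1$) yields $\sum_a\mu_0=N$; subtraction then gives $\sum_a\mu=N$, and $\mu\geqslant 1$ at any genuinely singular isolated hypersurface point finishes it. What your route buys is that you never need to know in advance that the singularities are rational or non-$\bQ$-factorial (i.e.\ you bypass the Kawamata-type input behind Remark \ref{mu0defect3D}); what it costs is the extra (easy) Euler-characteristic stratification step, and as a by-product you only learn $\sum_a\mu_0=N$ globally rather than $\mu_0=1$ pointwise — though once each point is an ODP with a small resolution that follows anyway. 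One small housekeeping point: like the paper, you should say where the connectedness hypothesis on $Z_n(\sigma)$ enters — it guarantees $D_n(\sigma)$ is irreducible, so that the cited results (stated under the paper's standing convention that varieties are irreducible) apply to it.
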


\begin{proof}
We abbreviate $D_i(\sigma)$ to $D_i$ for all $i$. By assumption, the determinantal hypersurface $D_n$ is irreducible. Since $\sigma$ is $n$-general and the codimension of $D_{n -  1}$ in $M$ is $4$, the number of singular locus $D_{n -  1}$ of $D_n$ equals the degree of the zero cycle $[D_{n - 1}]$ (see Remark \ref{singdeg}). The theorem will be proved by showing that the Milnor number $\mu (D_{n}, x)$ is $1$ for $x \in D_{n - 1}$.

By Proposition \ref{interMuIH} and Corollary \ref{maincoro},
\begin{equation} \label{ODPcoroeq}
\sum\nolimits_{x \in D_{n - 1}} (\mu_0 (D_{n}, x) + \mu (D_{n}, x)) = \mu_{I H}(D_{n}) = 2 \cdot |D_{n - 1}|. 
\end{equation}
According to that integers $\mu (D_{n}, x)$ and $\mu_0 (D_{n}, x)$ are greater than zero for all $x \in D_{n - 1}$ (see Remark \ref{mu0defect3D}) and \eqref{ODPcoroeq}, it follows that these are equal to one. Hence the $3$-dimensional hypersurface $D_n$ has only ODPs.
\end{proof}







We are going to give a formula of intersections products on the smooth variety $Z_n(\sigma)$. Recall that $\jmath : Z_n(\sigma) \hookrightarrow \bP(\sF)$ is the inclusion, $p : \bP(\sF) \to M$ the bundle map and $d = \dim M$. To simplify notations, we write $Z$ (resp. $D$) instead of $Z_n(\sigma)$ (resp. $D_n(\sigma)$).



\begin{proposition} \label{interprod} 
Fix a $n$-general morphism $\sigma$. Let $\xi = c_1(\sO_{\bP(\sF)} (1))$ and $L = \jmath^{\ast} \xi$. Given a line bundle $\sQ$ on $M$, let $H_M = c_1(\sQ)$ and $H_Z = \jmath^{\ast} p^{\ast} H_M$.  
\begin{enumerate}
    \item \label{internum1} For $0 \leqslant k \leqslant d - 1$,
    $$
    \int_Z H_Z^k . L^{d - 1 -k} = \int_M H_M^k . c_{d - k}(\sE^{\vee} - \sF^{\vee}).
    $$
    \item \label{internum2} Under the assumption $d = 4$ and $c_1(\sF - \sE) = c_1(T_M)$,
    $$
    \int_Z c_2(T_Z).H_Z = \int_M c_2(T_M).c_1(\sE^{\vee} - \sF^{\vee}).H_M,
    $$
    $$
    \int_Z c_2(T_Z).L = \int_M c_2(T_M).c_2(\sE^{\vee} - \sF^{\vee}) - |\Sing (D)|.
    $$
\end{enumerate}
\end{proposition}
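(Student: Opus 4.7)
Both parts rest on the fact that, since $\sigma$ is $n$-general, $Z \coloneqq Z_n(\sigma)$ is a regular zero scheme of a section of the rank-$(n+1)$ bundle $p^*\sE^\vee \otimes \sO_{\bP(\sF)}(1)$. The complete intersection formula \eqref{fumdcl} together with \eqref{ch2} of Lemma \ref{chernformula} then gives
$$
[Z] \;=\; \sum_{i \geq 0} p^*c_i(\sE^\vee) \cdot \xi^{n+1-i} \,\cap\, [\bP(\sF)].
$$

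For claim \eqref{internum1}, the plan is to combine this expansion with the projection formula for $p$ and the standard identity $p_*(\xi^{n+j}) = s_j(\sF^\vee)$ (the $j$-th Segre class, under the $1$-dimensional quotient convention for $\bP(\sF)$). The integral reduces to
$$
\int_Z H_Z^k L^{d-1-k} \;=\; \sum_{i} \int_M H_M^k \cdot c_i(\sE^\vee) \cdot s_{d-k-i}(\sF^\vee),
$$
and since $c(\sE^\vee - \sF^\vee) = c(\sE^\vee) \cdot s(\sF^\vee)$ by definition, the inner sum assembles to $c_{d-k}(\sE^\vee - \sF^\vee)$, giving (1).

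For claim \eqref{internum2}, I would first compute $c_2(T_Z)$ using the normal-bundle sequence for $\jmath$, the relative tangent sequence for $p$, and the Euler sequence $0 \to \sO \to p^*\sF^\vee \otimes \sO_{\bP(\sF)}(1) \to T_{\bP(\sF)/M} \to 0$, which together yield
$$
c(T_Z) \;=\; \jmath^*\!\left(\frac{p^*c(T_M) \cdot c(p^*\sF^\vee \otimes \sO(1))}{c(p^*\sE^\vee \otimes \sO(1))}\right).
$$
Expanding via \eqref{ch1} of Lemma \ref{chernformula} and imposing the Calabi--Yau hypothesis $c_1(T_M) = c_1(\sF - \sE)$ gives $c_1(T_Z) = 0$ together with a decomposition
$$
c_2(T_Z) \;=\; p^*\alpha \;+\; L \cdot p^*c_1(T_M), \quad \alpha \;=\; c_2(T_M) - c_1(T_M)^2 - c_1(\sE)c_1(T_M) + c_2(\sF) - c_2(\sE).
$$

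Now pair $c_2(T_Z)$ with $H_Z$ or $L$ and push each summand down to $M$ via claim (1). For the $H_Z$-pairing, expanding $c_2(\sE^\vee - \sF^\vee) = c_2(\sE) - c_2(\sF) + c_1(\sF) c_1(\sF - \sE)$ and invoking the CY identity cancels every term in $\alpha$ except $c_2(T_M)$, leaving $\int_M c_2(T_M) c_1(\sE^\vee - \sF^\vee) H_M$. For the $L$-pairing, the analogous pushdown produces $\int_M c_2(T_M) c_2(\sE^\vee - \sF^\vee)$ plus a residual degree-four class on $M$, which by the (CY-dependent) identity $c_2(\sE^\vee - \sF^\vee) = c_1(T_M)^2 + c_1(\sE)c_1(T_M) + c_2(\sE) - c_2(\sF)$ collapses to $-\bigl(c_2(\sE^\vee - \sF^\vee)^2 - c_1(\sE^\vee - \sF^\vee)\,c_3(\sE^\vee - \sF^\vee)\bigr) = -s_{(2,2)}(\sF - \sE)$. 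Integrating and invoking the Giambelli--Thom--Porteous formula \eqref{GTPformula} together with Remark \ref{singdeg} identifies this with $-\deg[D_{n-1}(\sigma)] = -|\Sing(D)|$. The main obstacle is organizing the Chern class bookkeeping so that the $s_{(2,2)}$ shape emerges cleanly from the CY simplification.
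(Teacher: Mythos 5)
Your proposal is correct and follows essentially the same route as the paper: for (1) you expand $[Z]$ as the top Chern class of $p^{\ast}\sE^{\vee}\otimes\sO_{\bP(\sF)}(1)$ and push down via Segre classes of $\sF^{\vee}$, and for (2) you compute $c_2(T_Z)$ from the normal and relative Euler sequences, simplify under the Calabi--Yau condition (your $\alpha$ equals the paper's $c_2(T_M)-c_2(\sE^{\vee}-\sF^{\vee})$), and identify the residual term with $s_{(2,2)}(\sF-\sE)=\deg[D_{n-1}(\sigma)]=|\Sing(D)|$ via Giambelli--Thom--Porteous and Remark \ref{singdeg}. Only cosmetic points: the decomposition should read $\jmath^{\ast}p^{\ast}\alpha$ rather than $p^{\ast}\alpha$, and the pushdowns in (2) use the projection-formula computation underlying (1) with an arbitrary pullback class in place of $H_M^k$, which the proof of (1) gives verbatim.
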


\begin{proof}
For simplicity of notation, we write $\sO_{\sF}(1)$ instead of $\sO_{\bP(\sF)}(1)$. Recall that $Z$ is the zero scheme of the global section of $p^{\ast} \sE^{\vee} \otimes \sO_{\sF}(1)$, which is induced by $\sigma : \sE \to \sF$. Then we have, by \eqref{fumdcl} and \eqref{ch2},  
\begin{align} \label{jzfundcl}
     \jmath_{\ast} [Z] &= c_{n + 1} (p^{\ast} \sE^{\vee} \otimes \sO_{\sF}(1)) \cap [\bP(\sF)] \notag \\ 
     &= \sum\nolimits_{i \geqslant 0} c_i(p^{\ast} \sE^{\vee}) . \xi^{n + 1 - i} \cap [\bP(\sF)] \\
     &=  \sum\nolimits_{i \geqslant 0}  \xi^{n + 1 - i} \cap p^{\ast} ( c_i( \sE^{\vee}) \cap [M]) \notag.
\end{align}
Hence, by the projection formula, \eqref{jzfundcl} and the definition of $c_{d - k} (\sE^{\vee} - \sF^{\vee})$,
\begin{align} \label{segrecl}
    p_{\ast} \jmath_{\ast} ( L^{d - 1 -k} \cap [Z]) &= 
    p_{\ast} (\sum\nolimits_{i \geqslant 0} \xi^{n + d - k -i} \cap p^{\ast} (c_i(\sE^{\vee}) \cap [M])) \notag \\
    &= \sum\nolimits_{i \geqslant 0} s_{d - k -i} (\sF^{\vee}).  c_i(\sE^{\vee}) \cap [M]  \\
    &= c_{d - k} (\sE^{\vee} - \sF^{\vee}) \cap [M] \notag,
\end{align}
which establishes the formula \eqref{internum1} of $\int_Z H^{k}_Z . L^{d - 1 - k}$ for $0 \leqslant k \leqslant d -1$.

Now assume that $d = 4$ and $c_1(\sF - \sE) = c_1(T_M)$. We claim that
\begin{equation} \label{chernTZ2}
    c_2(T_Z) = \jmath^{\ast} p^{\ast} ( c_2 (T_M) -  c_2 (\sE^{\vee} - \sF^{\vee}) ) - \jmath^{\ast} p^{\ast} c_1 (\sF^{\vee} - \sE^{\vee}) . L.
\end{equation}
Assume the claim is proved. Then the zero cycle $p_{\ast} \jmath_{\ast} (c_2(T_Z) . L \cap [Z])$ is 
\begin{align*}
    \{[ c_2(T_M) -  c_2(\sE^{\vee} - \sF^{\vee})]. c_2(\sE^{\vee} &- \sF^{\vee}) \\
    &- c_1(\sF^{\vee} - \sE^{\vee}). c_3(\sE^{\vee} - \sF^{\vee})\} \cap [M].
\end{align*}
by \eqref{segrecl}. Since $c_1(\sF^{\vee} - \sE^{\vee}) = - c_1(\sE^{\vee} - \sF^{\vee})$, we get
$$
\int_Z c_2(T_Z).L = \int_M (c_2(T_M). c_2(\sE^{\vee} - \sF^{\vee}) - s_{(2, 2)}(\sE^{\vee} - \sF^{\vee})).
$$
It is easily seen that $s_{(2, 2)}(\sE^{\vee} - \sF^{\vee}) = s_{(2, 2)}(\sF - \sE)$ (cf.~\cite[Lemma 14.5.1]{fulton98}). Then the formula of $\int_Z c_2(T_Z) . L$ follows from the singular locus of $D$ being the $(n - 1)$th degeneracy locus of $\sigma$ and Giambelli-Thom-Porteous formula  \eqref{GTPformula}. The formula of $\int_Z c_2(T_Z) . H_Z$ is proved similarly.

To show the formula of $c_2(T_Z)$, we first recall that by \cite[Example 3.2.11]{fulton98},
\begin{equation*}
    c(T_{\bP(\sF)}) = c(p^{\ast} T_M) \cdot c(p^{\ast} \sF^{\vee} \otimes \sO_{\sF}(1)).
\end{equation*}
From the normal exact sequence
$$
0 \to T_Z \to \jmath^{\ast} T_{\bP(\sF)} \to \jmath^{\ast} (p^{\ast} \sE^{\vee} \otimes \sO_{\sF}(1)) \to 0,
$$
where the right hand term is the normal bundle of $Z$ in $\bP(\sF)$, we have
\begin{align} \label{chernTZ}
    c(T_{Z}) &= \jmath^{\ast} (c(T_{\bP(\sF)}) / c(p^{\ast} \sE^{\vee} \otimes \sO_{\sF}(1))) \\
    &= \jmath^{\ast} (c(p^{\ast} \sF^{\vee} \otimes \sO_{\sF}(1) - p^{\ast} \sE^{\vee} \otimes \sO_{\sF}(1)) \cdot c(p^{\ast} T_M)). \notag
\end{align}
Applying the formula \eqref{ch1} to \eqref{chernTZ}, we get
\begin{align} \label{chernTZorig}
    c_2 (T_Z) = \jmath^{\ast} p^{\ast} ( c_2(T_M) +  c_1 (\sF^{\vee} - \sE^{\vee}). c_1(T_M) +& c_2(\sF^{\vee} - \sE^{\vee})) \notag \\
    &- \jmath^{\ast} p^{\ast} c_1(\sF^{\vee} - \sE^{\vee}) . L. 
\end{align}
According to 
the observation that $c(\sF^{\vee} - \sE^{\vee}) \cdot c(\sE^{\vee} - \sF^{\vee}) = 1$ and $c_1(T_M) = c_1 (\sE^{\vee} - \sF^{\vee})$ by our assumption, it follows that
$$
c_1 (\sF^{\vee} - \sE^{\vee}) \cdot c_1(T_M) + c_2(\sF^{\vee} - \sE^{\vee}) = - c_2 (\sE^{\vee} - \sF^{\vee}),
$$
and \eqref{chernTZ2} is proved.
\end{proof}

\begin{remark}
If we replace \eqref{chernTZ2} with \eqref{chernTZorig}, then the same proof works when we drop the assumption $c_1(T_M) = c_1 (\sE^{\vee} - \sF^{\vee})$. For simplicity, we state Proposition \ref{interprod} \eqref{internum2} under the Calabi--Yau condition. 

\end{remark}

Let us illustrate the above results with examples.

\begin{example} \label{exampleCY}
We use the same notation as in Proposition \ref{interprod} where $M = \bP^4$ and $\sQ = \cO (1)$, and construct quintic hypersurfaces with $46$ ODPs. 

Let $\sF = \cO^{\oplus 4}$ and $\sE = \cO (- 1)^{\oplus 3} \oplus \cO(- 2)$. Given a $3$-general morphism $\sigma : \sE \to \sF$, let $Z = Z_3 (\sigma)$ and $D = D_3 (\sigma)$. Observe that $Z$ is a complete intersection of a $(2, 1)$ and three $(1, 1)$ ample divisors in $\bP (\sF) = \bP^4 \times \bP^3$, and thus it is connected by Lefschetz hyperplane theorem. By Theorem \ref{ODPcoro} and Proposition \ref{interprod}, we obtain Table \ref{exampleCYintsec} (cf.~\cite[Lemma 4.1]{CynkRams15}).

\begin{table}[htbp]
  \caption{Intersection numbers of the quintic containing $B$}\label{exampleCYintsec}
  \centering
  \begin{tabular}{ccccccc}
    \hline
    $L^3$ & $L^2.H$ & $L.H^2$ & $H^3$ & $L.c_2(T_Z)$ & $H.c_2(T_Z)$ & $\#$ of ODPs \\
    \hline
     2 & 7 & 9 & 5 & 44 & 50 & 46 \\
    \hline
  \end{tabular}
\end{table}


This construction includes the example in \cite{CynkRams15}, which is the blow-up of $D$ along a smooth surface $B \subseteq \bP^4$ (cf.~\cite[(4.4)]{CynkRams15}). More precisely, let $B$ be the blow-up of $\bP^2$ at $10$ general points, which is called a Bordiga surface. We can describe $B$ in terms of a degeneracy locus in $\bP^4$. Indeed, the surface $B$ is the locus where the $4 \times 3$ matrix $\tau = [\tau_{ij}]$ of generic linear forms on $\bP^4$ 
has rank $\leqslant 1$. In other words, we have the resolution
\begin{equation*}
    0 \to \cO(- 1)^{\oplus 3} \xrightarrow{\tau} \cO^{\oplus 4} \to \cI_{B}(3) \to 0
\end{equation*}
and $B =D_2 (\tau)$, where $\cI_B$ is the ideal sheaf of $B$ and $\tau$ is $2$-general.



Choose four generic homogeneous polynomials of degree $2$ on $\bP^4$ which induce a morphism $\sigma_1 : \cO (-2) \to \sF$. Then it gives rise to a $3$-general morphism $\sigma = \tau \oplus \sigma_1$ from $\sE$ to $\sF$. Observe that $D = D_3 (\sigma)$ contains the Bordiga surface $B$ and there is a natural morphism $\varphi : Z \to \Bl_B D$ factoring the determinantal contraction $\pi : Z \to D$. Since $\varphi$ is a birational morphism between projective varieties with the same Picard number, the blow-up of $D$ along $B$ is isomorphic to $Z$.  






\end{example}

A series of examples of $3$-dimensional Calabi--Yau varieties will be discussed in detail in a forthcoming paper.






\begin{example} \label{exampleFano}

We also can construct Fano determinantal hypersurfaces in a similar way as Example \ref{exampleCY}. For instance, we consider the case that $M = \bP^4$ and vectore bundles $\sE$, $\sF$ of rank $n + 1$ are direct sums of line bundles with $c_1 (\sF - \sE) = c_1 (\cO (4))$. The determinant of a $n$-general morphism $\sigma :\sE \to \sF$ gives rise to a quartic hypersurface with ODPs in $\bP^4$. 



Abbreviating $\oplus_{i = 1}^r \cO (a_i)$ to $\cO_{\bP^4} (a_1, \cdots, a_r)$, we have the complete list of such $n$-general morphisms $\sigma : \sE \to \sF$ in Table \ref{quarticsinP4} (up to tensor product with a line bundle and dualizing morphisms $\sigma$). The first four cases correspond to \cite[Example 7.3- 7.6]{CHNP13}, which induce building blocks used to construct examples of compact $G_2$-manifold (see \cite{CHNP13} and references therein).


\begin{table}[htbp]
  \centering
  \caption{Quartics in $\bP^4$ contain a special surface} \label{quarticsinP4}
  \begin{tabular}{ccc}
    \hline
    Morphism $\sigma$ & Special surface in $\bP^4$   & $\#$ of ODPs \\ 
       & containing $\Sing(D)$                 \\
    \hline
     $\cO_{\bP^4} (0, 0) \to \cO _{\bP^4}(1, 3)$  & a plane $\Pi$ & 9 \\ 
     $\cO_{\bP^4}(-1, 0) \to \cO_{\bP^4}(1, 2)$ & a quadric surface $Q^2_2$ & 12\\ 
     $\cO_{\bP^4}(0, 0, 0) \to \cO_{\bP^4}(1, 1, 2)$ & a cubic scroll surface $\mathbb{F}$ & 17 \\ 
     $\cO_{\bP^4}(0, 0) \to \cO_{\bP^4}(2, 2)$ & a c.i. of two quadrics $F_{2, 2}$ & 16 \\
     $\cO_{\bP^4}(0, 0, 0, 0) \to \cO_{\bP^4}(1, 1, 1, 1)$ & a Bordiga surface $B$ & 20 \\
    \hline
  \end{tabular}
\end{table}
\end{example}





\begin{example}
We use the same notation with above examples. Let $\sE = \cO_{\bP^4} (0, 0)$ and $\sF = \cO_{\bP^4} (2, 2)$. We pick a $1$-general morpshim $\sigma : \sE \to \sF$ such that $D_1 (\sigma)$ is a nodal quartic hypersurface in $\bP^4$ containing a smooth del Pezzo surface $F_{2, 2}$ of degree $4$ and $|\Sing (D_1 (\sigma))| = 16$. Also, $Z_1 (\sigma) \to D_1 (\sigma)$ is a small resolution of $D_1 (\sigma)$.

Let us apply the formula of $\mu_{IH} (D_1 (\sigma))$ to compute $\chi (Z_1 (\sigma))$. Note that the topological Euler characteristics of a smooth quartic hypersurface is $- 56$. Since $D_1 (\sigma)$ has exactly $16$ ODPs and Corrollary \ref{maincoro}, we get $\mu_{IH} (D_1 (\sigma)) = 32$. By Proposition \ref{chernZ}, $\chi (Z_1 (\sigma)) = \mu_{IH} (D_1 (\sigma)) - 56 = - 24$.

Furthermore, the sufficiently general hypersurface $D_1 (\sigma)$ is nonrational \cite[Theorem 11]{Cheltsov06}. Indeed, I. Cheltsov (see \cite[\S 3]{Cheltsov06}) observed that the restriction of $Z_1 (\sigma)$ to the second projection $\bP (\sF) \to \bP^1$ is a standard del Pezzo fibration of degree $4$. Then $D_1 (\sigma)$ is nonrational if $\chi (Z_1 (\sigma)) \neq 0, -4$ or $-8$.
\end{example}

\begin{remark}
We mention the rationality problem for other (sufficiently general) quartic threefolds $D$ in Table \ref{quarticsinP4}. If $D$ contains $\Pi$ or $Q_2^2$, then it is nonrational (see \cite[Theorem 6, Example 10]{Cheltsov06}, \cite[Corollary 1.11]{CG17}). On the other hand, $D$ is rational if it contains $\mathbb{F}$ or $B$ (see \cite[Entry 30 in Table 1]{Kaloghiros12}, \cite[Example 7]{Cheltsov06}). 
\end{remark}



\end{document}